\documentclass[11pt]{amsart}
\usepackage{amsfonts}

\usepackage[all,cmtip]{xy}
\usepackage{amsmath}
\usepackage{amsthm}
\usepackage{amssymb}
\usepackage{enumitem}       
\newtheorem{thm}{Theorem}[section]
\newtheorem{conj}{Conjecture}[section]
\newtheorem{prop}[thm]{Proposition}
\newtheorem*{definition*}         {Definition}
\newtheorem{lemma}[thm]{Lemma}

\newtheorem{cor}[thm]{Corollary}

\theoremstyle{remark}

\newcommand*{\Q}{\mathbb{Q}}
\newcommand*{\Hh}{\mathbb{H}}

\newcommand*{\Qa}{\overline{\mathbb{Q}}}
\newcommand*{\Z}{\mathbb{Z}}

\newcommand*{\GG}{\mathbb{G}}
\newcommand*{\R}{\mathbb{R}}

\newcommand*{\C}{\mathbb{C}}

\newcommand*{\PP}{\mathbb{P}}

\newcommand*{\ra}{\rightarrow}

%
%Colour macros introduced here!
%
\usepackage{amscd,amssymb,amsmath}
\usepackage{color}
\definecolor{purple}{rgb}{0.59, 0.44, 0.84}

\title{Independence of CM 
points in Elliptic Curves}
\author{Jonathan Pila and Jacob Tsimerman}
\begin{document}
\maketitle

\begin{abstract} We prove a result which describes, for each $n\ge 1$,
all linear dependencies among $n$ images in elliptic curves 
of special points in modular or Shimura curves under 
parameterizations  (or correspondences). Our result unifies and improves 
in certain aspects previous work of Rosen-Silverman--K\"uhne and Buium-Poonen.
%\red{And also large ranks of Elliptic curves over Hilbert Class Fields
%(Templier)???}
\end{abstract}

\section{Introduction and main results}

Let $Y$ be a modular (or Shimura) curve, $E$ an elliptic curve over $\C$ %$\overline{\Q}$, 
and  $V\subset Y\times E$ an irreducible correspondence. If $(s,x)\in V$ we will call $x$
a {\it $V$-image\/} of $s$. We prove a result describing, for each $n\ge 1$, all linear dependencies 
in $E$ among the $V$-images of $n$ special points in $Y$.

An example of particular interest is when $V$ is the graph of a modular parameterization
$\phi: Y\rightarrow E$ and then the $V$-images of special points are known
as {\it CM points\/} or \emph{Heegner points} (though the latter term is usually taken to have 
some further assumptions). A number of results in the literature establish linear independence
of CM points under suitable hypotheses. After framing our result we compare it with previous results.

\begin{definition*}
{\rm With notation as above, and $n\ge 1$, let $\pi_{Y^n}, \pi_{E^n}$ be the projections
of $Y^n\times E^n$ onto the first and second factors, respectively.

(i) A {\it special graph\/} in $V^n$ is a component
$W\subset V^n\cap (S\times B)$, where $S\subset Y^n$ and $B\subset E^n$ are 
special subvarieties, such that $\pi_{Y^n}(W)=S$, $\pi_{E^n}(W)\subset B$.}

{\rm (ii) A special graph $W$ in $V^n$ is called {\it dependent\/} if $B$ 
(may be taken such that it) is a proper special subvariety.}

{\rm (iii) A special graph $W$ in $V^n$ is called {\it exemplary\/} if, setting $B$ to be the smallest special 
subvariety of $E^n$ with $\pi_{E^n}(W)\subset B$, there is no 
special graph $W'$ strictly larger than $W$ with $\pi_{E^n}(W')\subset B$.}
\end{definition*}

In particular when $V$ is the graph of a parameterization $\phi: Y\rightarrow E$, 
a special graph is simply the graph of the restriction of $\phi$ to a special 
subvariety $S\subset Y^n$.
The special subvarieties in $E^n$ are the cosets of abelian subvarieties by torsion
points (``torsion cosets''); the special subvarieties of $Y^n$ are described e.g. in \cite{PILA}.

Let $x_1,\ldots, x_n\in E$ be $V$-images of special points $s_1,\ldots, s_n\in Y$.
Write $s=(s_1,\ldots, s_n)\in Y^n, x=(x_1,\ldots, x_n)\in E^n$.
If $(s,x)\in W$ for some dependent special graph in $V^n$, then the points $x_1,\ldots, x_n\in E$
are linearly dependent in $E$. Note that, for us, {\it linear dependence in $E$\/} is always taken
to be over ${\rm End}(E)$. We have that ${\rm End}(E)=\Z$ unless $E$ has CM
(complex multiplication), in which case ${\rm End}(E)$ is an order in an imaginary
quadratic field.

Conversely, if $x_1,\ldots, x_n$ are linearly dependent in $E$ then $(s,x)$ is contained
in some exemplary dependent special graph.
Note that the unique non-dependent exemplary special graph is $V^n$ itself
as a component of $V^n\cap (Y^n\times E^n)$.

The following theorem thus gives a description of every linear dependence among
$V$-images of $n$ special points.

\begin{thm}\label{main}

Given $V\subset Y\times E$ as above with $Y$ a modular curve or a Shimura curve and $n\ge 1$, there are only finitely 
many exemplary special graphs in $V^n$.

\end{thm}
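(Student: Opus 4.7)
The plan is to apply a Pila--Zannier-type strategy, generalizing K\"uhne's treatment of the case $n=1$. Each exemplary dependent special graph $W$ is determined by its projection $S=\pi_{Y^n}(W)\subset Y^n$: exemplarity pins down the smallest special $B\supset\pi_{E^n}(W)$, and $W$ is then the unique component of $V^n\cap(S\times B)$ projecting onto $S$. By the classification of special subvarieties of $Y^n$ recalled in \cite{PILA}, such an $S$ is specified by a partition of $\{1,\ldots,n\}$ together with, for each frozen coordinate, the choice of a CM point, and for each block of non-frozen coordinates, Hecke correspondences relating them. Assign to $S$ a complexity $\Delta(S)$ that simultaneously bounds the discriminants of the CM orders involved and the levels of the Hecke correspondences; the theorem then reduces to showing that $\Delta(S)$ is uniformly bounded on the $S$ supporting an exemplary dependent graph.

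The core argument combines o-minimality with Galois orbit lower bounds. Uniformize $Y^n$ by $\Hh^n$ (using the Harish-Chandra realization in the Shimura curve case) and $E^n$ by $\C^n$ via the exponential, and restrict to a product $F$ of fundamental domains, so that the preimage of $V^n$ in $F\times\C^n$ is definable in $\R_{\mathrm{an,exp}}$. For any proper special $B\subset E^n$, the condition $\pi_{E^n}(W)\subset B$ translates into a nontrivial integer-linear relation on the $\C^n$-coordinates modulo the lattice, while $\pi_{Y^n}(W)=S$ imposes algebraic conditions of controlled complexity on the $\Hh^n$-coordinates (quadratic-imaginary on frozen coordinates, modular-polynomial relations on the rest). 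Packaging these together into a single definable family produces, for each exemplary dependent graph of complexity $\Delta(S)$, an algebraic point of bounded degree and of height polynomial in $\Delta(S)$. The Pila--Wilkie theorem then bounds the number of such points that lie off the algebraic part of the family. On the arithmetic side, reciprocity on CM points together with Siegel's lower bound for class numbers (and its generalization to Shimura curves) forces the Galois orbit of the data defining $S$ to have size $\gg\Delta(S)^{\delta}$ for some $\delta>0$. Balancing the two estimates bounds $\Delta(S)$ uniformly, so only finitely many $S$ occur, and hence only finitely many exemplary dependent graphs; the unique non-dependent exemplary graph is $V^n$ itself, as already noted.

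The main obstacle is the mixed structure of a general special $S$: some coordinates are frozen at CM points while others vary over Hecke-related blocks. When $S$ is a pure product of CM points, the argument essentially reduces to K\"uhne's theorem applied coordinate by coordinate; when $S$ has no frozen coordinates, it can be attacked via Ax--Schanuel for the uniformization $\Hh^n\times\C^n\to Y^n\times E^n$ combined with Pila--Wilkie. The genuine difficulty arises when both types appear simultaneously, since one must show that the Galois action on $S$ produces enough distinct special subvarieties with orbit size matching $\Delta(S)$, and one must invoke Ax--Schanuel (or a direct bi-algebraic argument) to rule out the possibility that the apparent dependence $\pi_{E^n}(W)\subset B$ is forced by a smaller bi-algebraic subvariety, which would both destroy exemplarity and invalidate the Pila--Wilkie count. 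The Shimura curve case, where the reciprocity law on CM points is more delicate than on the modular curve, is an additional, though by now standard, complication.
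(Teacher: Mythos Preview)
Your broad strategy---Pila--Wilkie counting against Galois orbit lower bounds, with Ax--Schanuel to control the algebraic locus---matches the paper's. But there is a real gap in the step where you assert that each exemplary dependent graph yields an algebraic point ``of height polynomial in $\Delta(S)$''. That point must encode not only the data of $S$ but also the integer (or ${\rm End}(E)$) coefficients of the linear relation cutting out $B\subset E^n$, and nothing you have said bounds those coefficients. The paper supplies this via Masser's Theorem~E on linear relations in algebraic groups: if $x_1,\ldots,x_n\in E(K)$ have N\'eron--Tate heights at most $q$, then the relation group is generated by vectors with $|m_i|$ bounded polynomially in $q$, the torsion order $\omega(E,K)$, and $\eta(E,K)^{-1}$. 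Feeding in the height and degree bounds for CM points (Proposition~4.1) turns this into $||m_i||\le C{\bf \Delta}^c$. Without this arithmetic ingredient the definable set cannot be parameterised so that the relevant points have controlled height, and the counting does not start.

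There is a second, structural divergence. You propose to bound the Hecke levels in the non-frozen blocks by the same Galois/counting mechanism, but a strongly special $S_1\subset Y^k$ defined only by Hecke correspondences is already defined over $\Q$, so Galois orbits give nothing. The paper instead proves directly (Proposition~3.4), via Ax--Schanuel and o-minimality applied to Zariski-optimal intersection components of $u^{-1}(V^k)$ with products of M\"obius and linear subvarieties, that only finitely many strongly special $S_1$ admit a $V$-image inside any proper weakly special of $E^k$. This reduces the problem to bounding the complexity of the special-point factor $S_2$ for each of finitely many fixed $S_1$; the mixed interaction is then handled by extracting the constant points $p_1,\ldots,p_k\in E$ coming from relations on $\pi_{E^{A_1}}(W_1)$ and including them alongside the $\eta_j$ before invoking Masser. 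Finally, all of this is carried out first over $\overline{\Q}$, with a separate specialisation argument (via Masser's specialisation theorem and isogeny estimates) to pass to $\C$; your proposal does not address this reduction.
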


\noindent
{\bf Example.\/}
It is well known that $X_0(11)$ has the structure of an elliptic curve, so we may set $E={\rm Pic}^0(X_0(11))$. 
Consider the Atkin-Lehner involution $w$. Now on $E$, $w$ is a non-trivial
automorphism, so its graph must be an abelian subvariety. Set $\phi: X_0(11)\ra E$ to be 
the identification taking $(\infty)\ra 0$. Then $\phi(w(\infty)) = \phi(0)$ is a torsion point, and thus  
if we set $S\subset X_0(11)^2$ to be the graph of $w$, then $S$ is a special curve whose 
$\phi$-graph is exemplary. 

\medskip

A number of results in the literature assert linear independence properties of
the $V$-images of CM points. 
The fact that only finitely many $V$-images of special points can be 
torsion was proved in \cite{NEKOVARSCHAPPACHER}
for modular parameterisations and Heegner points
(generalized to certain Shimura curve parameterizations in \cite{KHARERAJAN})
and is equivalent to the assertion of Theorem 1.1 for $n=1$.
This also follows from the stronger results in \cite{BUIUMPOONEN},
and was reproved as a ``special point problem''
within the Zilber-Pink conjecture in \cite{PILA}.

We deduce some consequences of Theorem 1.1 and compare with
some further results in the literature. For $N\ge 1$ we let $X_N\subset Y\times Y$
be the locus of points $(s_1, s_2)$ such that there is a cyclic isogeny of degree $N$
between the corresponding elliptic curves (when $Y$ is a modular curve)
or abelian surfaces (when $Y$ is a Shimura curve).

\medskip
\noindent
{\bf Definition.} Let $D$ be a positive integer.  A set of special points 
$\{s_1,\ldots, s_n\}$ in $Y$ is called 
\emph{$D$-independent} 
%\emph{modularly independent up to $D$} 
if,  for each $i$, the discriminant $|\Delta(s_i)| > D$ and, 
for $i\ne j$, there is no relation $(s_i, s_j)\in X_N$ with $N\le D$.

\begin{cor}
For $n\ge 1$ there exists a positive integer
$D=D(Y, E, V, n)$ such that if $\{s_1,\ldots, s_n\}$ 
is $D$-independent then any $V$-images $x_1,\ldots, x_n$ of $s_1, \ldots, s_n$ 
are linearly independent in $E$.\ \qed
\end{cor}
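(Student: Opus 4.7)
The plan is to deduce the corollary directly from Theorem \ref{main} together with the explicit description of special subvarieties of $Y^n$ recalled just after the definition. By Theorem \ref{main}, there are only finitely many exemplary special graphs $W_1,\ldots,W_r$ in $V^n$. Exactly one of them, namely $V^n$ itself, is non-dependent; I can throw it away and focus on the dependent ones. For each such $W_i$, the associated special subvariety $S_i:=\pi_{Y^n}(W_i)$ is a \emph{proper} special subvariety of $Y^n$, because otherwise $S_i=Y^n$ forces $W_i=V^n$ and (using that $V$ is irreducible and surjects onto $E$) the minimal $B$ containing $\pi_{E^n}(W_i)$ is all of $E^n$, contradicting dependence.

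Next, I would use the classification of special subvarieties of $Y^n$ (as in \cite{PILA}) to extract an explicit arithmetic obstruction from each $S_i$. A proper special subvariety of $Y^n$ is cut out by finitely many conditions of two types: some coordinates are required to take specified special values (each with a fixed discriminant), and some pairs of coordinates are required to be linked by a fixed isogeny correspondence $X_N$. Define $D=D(Y,E,V,n)$ to be the maximum of $|\Delta(s^{\ast})|$ over all special points $s^{\ast}$ appearing in the description of any $S_i$, together with the maximum $N$ over all isogeny correspondences $X_N$ appearing therein. This $D$ is finite because there are only finitely many $S_i$ and each carries only finitely many defining conditions.

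Now suppose $\{s_1,\ldots,s_n\}$ is $D$-independent and suppose for contradiction that some $V$-images $x_1,\ldots,x_n$ are linearly dependent. By the discussion preceding Theorem \ref{main}, the point $(s,x)$ then lies in some exemplary dependent special graph, i.e.\ in some $W_i$, so in particular $s\in S_i$. But the defining conditions of $S_i$ would force either $|\Delta(s_j)|\le D$ for some $j$, or $(s_j,s_k)\in X_N$ with $N\le D$ for some $j\ne k$, violating $D$-independence.

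The only subtlety I anticipate is verifying that any dependence really does come from an exemplary dependent graph (rather than some non-exemplary one); but this is essentially the paragraph between the definition and Theorem \ref{main}, since one can enlarge any dependent special graph $W$ to an exemplary one with the same $B$, and enlargement preserves dependence. The substantive content is Theorem \ref{main} itself, so the corollary is a bookkeeping exercise once the classification of special subvarieties of $Y^n$ is in hand.
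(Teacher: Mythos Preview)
Your proposal is correct and follows essentially the same approach as the paper's proof: invoke Theorem~\ref{main} to obtain finitely many exemplary dependent special graphs, note that the corresponding $S_i\subset Y^n$ are proper, read off the defining conditions (fixed special coordinates or fixed modular correspondences $X_N$) from the classification of special subvarieties, and take $D$ larger than all the invariants that appear. Your write-up is simply more explicit than the paper's terse paragraph---in particular, you spell out why each $S_i$ is proper and why any dependence is witnessed by an \emph{exemplary} dependent graph, points the paper leaves to the reader.
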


\begin{proof}
For $s=(s_1,\ldots, s_n)$ to have a $V$-image which is dependent requires $s$ to lie
in one of finitely many proper special subvarieties $S_1,\ldots, S_k\subset Y^n$,
and, for each $i$, $s\in S_i$ requires either that some coordinate is equal to a fixed
special point, or some $(s_i, s_j)\in X_N$ for some $i\ne j$ and fixed $N$.
These are not possible if $s$ is $D$-independent for sufficiently large $D$.
\end{proof}

Corollary 1.2 improves a result of K\"uhne \cite{KUHNE}
(which in turn improved a result of Rosen-Silverman 
\cite{ROSENSILVERMAN})   %in two ways: first we admit CM elliptic curves $E$, and second we 
by getting independence even for CM points  corresponding to orders in the same CM field, if the
orders are ``sufficiently far apart'' (i.e. if the corresponding singular moduli are modularly independent up 
to suitable $D$); the previous results required the CM fields of the $s_i$ to be distinct.
The results in \cite{KUHNE, ROSENSILVERMAN} also exclude CM elliptic curves $E$, though see \cite{SAHINOGLU},
and all these results  restrict to modular parameterizations of $E/\Q$.
However, K\"uhne's result is effective,  whereas our result is not.

%\bigbreak

Let $\Sigma$ denote the set of $V$-images in $E$ of special points of $S$.

\begin{cor}
For $n\ge 1$ there exists a positive integer $N=N(Y, E, V, n)$
such that if $x_1,\ldots, x_N\in \Sigma$ are distinct then there is a linearly independent 
subset of $\{x_i\}$ of size at least $n$.
\end{cor}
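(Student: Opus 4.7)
The plan is to prove Corollary~1.3 by contradiction, combining Theorem~\ref{main} with a dimension-theoretic counting argument. For $N$ to be determined, suppose $x_1, \ldots, x_N \in \Sigma$ are distinct and contain no linearly independent subset of size $n$; equivalently, every $n$-element subset is linearly dependent in $E$. For each $i$, fix a special point $s_i \in Y$ with $(s_i, x_i) \in V$. By the converse direction recorded immediately before Theorem~\ref{main}, each dependent $n$-subset $\{x_{i_1}, \ldots, x_{i_n}\}$ yields, in some ordering, a tuple $(s_{i_1}, \ldots, s_{i_n}, x_{i_1}, \ldots, x_{i_n})$ lying in an exemplary dependent special graph of $V^n$. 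Theorem~\ref{main} furnishes a finite list $W_1, \ldots, W_k$ of such graphs, with proper special projections $S_j := \pi_{Y^n}(W_j) \subsetneq Y^n$ of dimension at most $n-1$.

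The crux is to bound, for each $j$, the number of ordered tuples $(s_{i_1}, \ldots, s_{i_n}) \in S_j$ with indices from $\{1, \ldots, N\}$. Using the description of special subvarieties of $Y^n$ (see \cite{PILA}), each component of $S_j$ decomposes, after permuting coordinates, as a product of blocks indexed by a partition of $\{1, \ldots, n\}$: each block is either \emph{frozen} (coordinates fixed to specified special points) or \emph{free of dimension one} (a base coordinate varies over $Y$, with the remaining coordinates specific Hecke-type images of it). Since $S_j$ is proper, the number $f$ of free blocks satisfies $f = \dim S_j \leq n-1$. For each free block of size $r$, the base index has at most $N$ choices, while each remaining coordinate is constrained to a specific finite set of Hecke images of the base value; combined with the bound $d_Y := \deg(V \to Y)$ on how often a given $s$-value occurs among the $s_i$, this gives $O(1)$ index-choices per remaining coordinate. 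Frozen coordinates contribute $O(1)$ choices each. Multiplying yields at most $O(N^f) \leq O(N^{n-1})$ tuples in each $S_j$.

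Summing over the $W_j$ shows that the number of dependent ordered tuples with distinct indices from $\{1,\ldots,N\}$ is at most $C N^{n-1}$ for some constant $C = C(Y, E, V, n)$, whereas our assumption makes every one of the $N(N-1)\cdots(N-n+1) = \Omega(N^n)$ such tuples dependent. This polynomial mismatch fails for $N \geq N_0(Y, E, V, n)$, yielding the desired contradiction. I expect the main obstacle to be the $O(N^{n-1})$ counting estimate inside each $S_j$: it is essentially combinatorial but relies on the precise block-and-Hecke description of special subvarieties of $Y^n$ and on finiteness of Hecke orbits; once these are in hand, the pigeonhole/polynomial-counting closure is routine.
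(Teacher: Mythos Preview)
Your argument is correct, but it takes a different route from the paper. The paper's proof goes through Corollary~1.2: having fixed $D=D(Y,E,V,n)$, one observes that among any sufficiently large set of distinct $V$-images one can greedily extract $n$ whose special pre-images are $D$-independent (only finitely many special points have $|\Delta|\le D$, and each special point is $X_M$-related to only finitely many others for $M\le D$), and Corollary~1.2 then gives linear independence directly. You instead bypass Corollary~1.2 and work straight from Theorem~\ref{main}, bounding the number of index-tuples landing in each $S_j$ by $O(N^{\dim S_j})\le O(N^{n-1})$ via the block description of special subvarieties of $Y^n$, and contrasting this with the $\Omega(N^n)$ ordered $n$-tuples available. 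The paper's route is shorter because the structural content has already been packaged into the $D$-independence criterion; your route is more self-contained but requires re-opening that structure (block decomposition, finiteness of Hecke images, and the multiplicity bound $d_Y$ on repeated $s$-values). Two small remarks: your ``in some ordering'' is unnecessary, since for any ordering the dependent tuple lies in some exemplary dependent special graph; and your observation that $\dim S_j\le n-1$ uses that $\pi_E(V)=E$ (so that $W_j=V^n$ would force $B_j=E^n$), which is implicit throughout the paper.
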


\begin{proof}
Given $n$ we can find $N$ such that any set of $N$
distinct $V$-images of special points contains a subset of size $n$ for which
the corresponding special points are $D(Y, E, V, n)$-independent.
(And $N$ is effective given $D$.)
\end{proof}

\begin{cor}
Let $\Gamma$ be a finitely generated subgroup of $E$ of rank $r$.
Then $|\Gamma\cap \Sigma|\le N(Y, E, V, r+1)$.\ \qed
\end{cor}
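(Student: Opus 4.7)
The plan is to derive Corollary 1.4 immediately from Corollary 1.3 by contraposition, once one checks that a finitely generated subgroup of $\Z$-rank $r$ cannot contain $r+1$ elements which are independent over $\End(E)$.

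First I would set $N = N(Y, E, V, r+1)$ as provided by Corollary 1.3 (applied with $n = r+1$). Suppose for contradiction that $|\Gamma \cap \Sigma| > N(Y,E,V,r+1)$, so we may pick at least $N$ distinct elements $x_1, \ldots, x_N \in \Gamma \cap \Sigma$. By Corollary 1.3, some subset $\{x_{i_1}, \ldots, x_{i_{r+1}}\}$ of size $r+1$ is linearly independent in $E$; recall that, by the convention fixed in the introduction, this means $\End(E)$-linearly independent.

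The key supporting point is that, since $\Z \subseteq \End(E)$, any collection of elements of $E$ which is $\End(E)$-linearly independent is a fortiori $\Z$-linearly independent. But $\Gamma$ has $\Z$-rank $r$, so it contains no $r+1$ elements independent over $\Z$; this contradicts the previous paragraph. Hence $|\Gamma \cap \Sigma| \le N(Y, E, V, r+1)$.

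There is no real obstacle here: the entire content has already been established in Theorem 1.1 and packaged via Corollary 1.3. The only point that needs even a moment's thought is the CM case, where $\End(E)$ is strictly larger than $\Z$; but that direction of the comparison goes the correct way for the argument, as observed above.
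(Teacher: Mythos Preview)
Your argument is correct and is precisely the intended one: the paper marks Corollary 1.4 with a bare \qed, indicating it is meant to follow immediately from Corollary 1.3 by exactly the rank argument you give. Your observation about the CM case (that $\End(E)$-independence implies $\Z$-independence since $\Z\subset\End(E)$) is the only point requiring comment, and you handle it correctly.
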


This reproves a result of Buium-Poonen
(and generalizes to correspondences their result for maps from 
Shimura curves to elliptic curves) and in a uniform way:
the size of the intersection is bounded depending only on the rank of $\Gamma$.
However we cannot recover their ``Bogomolov''-type result.

In \S2 we show that Theorem 1.1 is a consequence of the Zilber-Pink conjecture (ZP).
The framing of ZP in terms of ``optimal subvarieties'' (as in \cite{HABEGGERPILA})
suggests the formulation of Theorem 1.1.

Our proof of Theorem 1.1 goes via point-counting on definable sets in o-minimal structures,
and utilizes a suitable Ax-Schanuel theorem, as have been employed in various earlier
work to tackle special cases of ZP, and in this respect follows in particular the approach in
\cite{PILATSIMERMAN} in studying ``CM-points'' for the multiplicative group.
As there, various issues arise from the fact
that we cannot prove the full Zilber-Pink statement for $V^n$.
But unlike in \cite{PILATSIMERMAN}, where we showed that no positive dimensional
dependent special graphs exist, we must here deal with this possibility,
which complicates the point-counting and the application of Ax-Schanuel, 
in view of our inability to affirm the full ZP. We must show that
we are able to restrict throughout to atypical intersections of a specific form. 

In effect, we must prove a very strong result of Andr\'e-Oort type: each proper 
special subvariety of $E^n$ has a pre-image in $Y^n$. This gives a countably 
infinite collection of subvarieties of $Y^n$ which is not contained in any algebraic family.
We must show that there are only finitely many special subvarieties  of $Y^n$
which are contained and maximal in any one of this countably infinite collection.

In the modular case we show that our results can be extended to include the Hecke orbits 
of a finite number of points in addition to special points.
The {\it Hecke orbit\/} of $u\in Y$ is $\{v\in Y: \exists N{\rm\ with\ } (u,v)\in X_N\}$.

\begin{definition*}

{\rm Let $Y$ be a modular curve and $U\subset Y$.}

{\rm (i) A {\it $U$-special point\/} of $Y$ is a point which is either
special or in the Hecke orbit of some $u\in U$. 

(ii) A {\it $U$-special point\/} in $Y^n$ is an
$n$-tuple of $U$-special points in $Y$.

(iii) A {\it $U$-special subvariety\/} of $Y^n$ is
a weakly special subvariety which contains a $U$-special point.}

\end{definition*}

Now we consider again an irreducible correspondence $V\subset Y\times E$.

\begin{definition*}

{\rm Let notation be as above.

(i) A {\it $U$-special graph\/} in $V^n$ is a component $W\subset V^n\cap (S\times B)$, where
$S\subset Y^n$ is $U$-special, $B\subset E^n$ is special, $\pi_{Y^n}(W)=S$, and $\pi_{E^n}(W)\subset B$. 

(ii) A $U$-special graph $W$ is {\it dependent\/} if $B$ (may be taken such that it) is a proper special subvariety.

(iii) A $U$-special graph $W$ is {\it exemplary\/} if,
setting $B$ to be the smallest special subvariety of $E^n$ with $\pi_{E^n}(W)\subset B$, there
is no $U$-special graph $W'$ strictly larger than $W$ with $\pi_{E^n}(W')\subset B$.}

\end{definition*}

\begin{thm}

Given $V\subset Y\times E$ as above with $Y$ a modular curve, $U\subset Y$ finite, and $n\ge 1$, there are only finitely 
many exemplary $U$-special graphs in $V^n$.

\end{thm}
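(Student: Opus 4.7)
The plan is to follow the strategy of Theorem~\ref{main} --- combining o-minimality, the hyperbolic Ax--Schanuel theorem, and Pila--Wilkie point counting on a definable uniformisation --- with modifications to accommodate the Hecke orbits of the finite set $U$. A $U$-special subvariety $S\subset Y^n$ is a weakly special subvariety containing a $U$-special point, so by the structure of weakly special subvarieties of $Y^n$, $S$ is determined by a partition of $\{1,\ldots,n\}$ into Hecke-linked classes with prescribed Hecke correspondence data, together with, for each fixed singleton coordinate, a constant which is either a CM point or lies in the Hecke orbit of some $u_j\in U$. Since $U$ is finite, the resulting collection of $U$-special subvarieties is countable, exactly parallel to the collection of special subvarieties used in the proof of Theorem~\ref{main}.

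The extra ingredient needed is a pair of quantitative bounds for Hecke-orbit points: for $v\in Y$ with $(u_j,v)\in X_N$, one has a modular height bound $h(v)\ll_{u_j}\log N$ together with a Galois lower bound $[\Q(u_j,v):\Q(u_j)]\gg_{u_j} N^{1-\epsilon}$. With these in hand, the parameter $N$ plays the role of $|\Delta|$ in the CM case: a $U$-special subvariety whose fixed coordinates involve large discriminants or large isogeny degrees produces many algebraic points in the definable preimage of $V^n$ in $\Hh^n\times \C^n$, enough to feed into the counting step.

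With this setup in place, I would rerun Pila--Wilkie on the definable set exactly as in the proof of Theorem~\ref{main}. An exemplary $U$-special graph would supply a family of algebraic points of bounded height and controlled degree; Pila--Wilkie then produces positive-dimensional algebraic families of atypical intersections; the hyperbolic Ax--Schanuel theorem constrains these families to lie in weakly special subvarieties of $Y^n\times E^n$; and specialisation to the $U$-special base points forces these to be $U$-special. The exemplary (maximality) condition then limits the total number of resulting exemplary $U$-special graphs to be finite, as in Theorem~\ref{main}.

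The main technical obstacle will be the uniform treatment of mixed $U$-special subvarieties, in which different fixed coordinates are governed by distinct CM discriminants or by Hecke isogeny degrees with respect to different $u_j$. One has to combine the CM and Hecke-orbit bounds without losing too much: the strategy is to stratify the $U$-special subvarieties by their combinatorial type (which fixed coordinates are CM, and which are Hecke-linked to which $u_j$) and to run the counting argument stratum by stratum, using whichever pivot coordinate has the largest growth parameter to drive the count.
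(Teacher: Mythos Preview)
Your overall strategy and the arithmetic ingredients you identify (logarithmic height growth and polynomial Galois growth for Hecke-orbit points) are correct and match the paper's approach. However, there is a genuine gap in the counting step. You propose to ``rerun Pila--Wilkie on the definable set exactly as in the proof of Theorem~\ref{main}'', expecting an exemplary $U$-special graph to supply ``many algebraic points in the definable preimage of $V^n$ in $\Hh^n\times\C^n$''. This fails for the Hecke-orbit coordinates: if $u_j\in U$ is not a CM point, its preimage $\mu_j\in F_Y\subset\Hh$ is transcendental, and so are the preimages $g\mu_j$ (with $g\in\GL_2^+(\Q)$) of the points in its Hecke orbit. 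Thus there are no algebraic points to count in the $\Hh$-coordinates attached to non-special $u_j$, and the definable set from Theorem~\ref{main} carries no usable rational/quadratic points in those factors.

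The paper's remedy is to change the definable set: for coordinates tied to a non-special $u_j$ one replaces the $F_Y$-factor by a $\GL_2^+(\R)$-factor, recording not the point $z\in F_Y$ itself but the element $h\in\GL_2^+(\R)$ with $h\mu_j\in F_Y$ and $u(h\mu_j,w)\in V$. One then counts \emph{rational} points $h\in\GL_2^+(\Q)$, whose heights are polynomially bounded in the isogeny degree $N$ (the paper quotes $H(g)\le cN^{10}$ from \cite{HPA}). After this modification the remainder of your sketch---Ax--Schanuel, Zariski-optimality as in Proposition~\ref{zardef}, and the stratification by combinatorial type mixing CM and Hecke-orbit coordinates---is exactly what the paper does. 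A minor quantitative point: the Galois lower bound the paper invokes is $[\Q(s_i):\Q]\gg N_i^{1/6}$ via isogeny estimates, weaker than the $N^{1-\epsilon}$ you state, but any positive power suffices for the counting.
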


One may deduce corollaries analogous to 1.2, 1.3, and 1.4 above. The last recovers
a result of Baldi (\cite{BALDI}, obtained via equidistribution) which is also a special case of results of 
Dill \cite{DILL, DILL2}, affirming a conjecture of Buium-Poonen \cite{BUIUMPOONEN2};
see the discussion in \cite{BALDI}. Baldi obtains a stronger ``Bogomolov''-type result, which we do not.
These results  are in the circle of the ``Andr\'e-Pink conjecture'', see \cite{PINK2}
and further references in \cite{BALDI}, though Theorem 1.5 is rather an ``unlikely intersection'' 
result in such contexts. Of course it too is subsumed under the general Zilber-Pink conjecture.

With existing arithmetic estimates Theorem 1.5 and its corollaries should generalize to
Shimura curves, with a suitable notion of Hecke orbit\footnote{There is an issue with abelian varieties that one could consider isogenies not necessarily respecting the polarization,
which complicates matters.}.

The structure of the paper is as follows. The Zilber-Pink setting is recalled in \S2.
The Ax-Schanuel statement and refinements we need are given in \S3. 
Some arithmetic estimates are collected in \S4. Theorems 1.1 and 1.5 are proved in \S5,
when everything is defined over a numberfield, and extended to $\C$ in \S6. 
In this paper, ``definable'' will mean ``definable in the o-minimal structure
$\R_{\rm an,\ exp}$''; for background on o-minimality and on 
$\R_{\rm an,\ exp}$ see \cite{PSDMJ}.

\medskip

\section{The Zilber-Pink setting}

We place Theorem 1.1 in the context of the Zilber-Pink conjecture (ZP) proposed
independently, in slightly different formulations, by Zilber \cite{ZILBERSUMS}, 
Bombieri-Masser-Zannier \cite{BMZANOMALOUS},  and Pink \cite{PINK}.

This concerns a {\it mixed Shimura variety\/} $M$ and its collection $\mathcal S$ of
{\it special  subvarieties}. One has also the larger collection of
{\it weakly special subvarieties\/}. For definitions see e.g. Gao \cite{GAO}. 
%On the formulation of ZP see e.g. \cite{HABEGGERPILA}, and below.
Let $Z\subset M$ be a subvariety.

%For a mixed Shimura variety $M$, its collection $\mathcal S$ of special subvarieties,
%and a subvariety $Z\subset M$,
For $S\in \mathcal S$,
a component $A\subset Z\cap S$ is {\it atypical\/} if
$$
\dim A> \dim Z+\dim S-\dim M.
$$
(The intersection is called {\it unlikely\/} if $\dim Z+\dim S-\dim M<0$.)
ZP predicts a description in finite terms of all ``atypical'' intersections
of $Z$ with special subvarieties $S\in\mathcal S$. 

For a subvariety $Z\subset M$ we let $\langle Z\rangle$ denote the smallest 
special subvariety of $M$ containing $Z$,
and by $\langle Z\rangle_{\rm ws}$ the smallest 
weakly special one. 

We define the \emph{defect} $\delta(Z)$ of $Z$
and the \emph{weakly special defect} $\delta_{\rm ws}(Z)$
%\red{Or use ``geodesic defect'' or  ``weak defect'' as in Daw-Ren??? 
%The issue is that ``weakly optimal'' has the wrong connotations???
%Or use ``bi-algebraic defect $\delta_{\rm bialg}$???} 
by
$$
\delta(Z)= \dim\langle Z\rangle -\dim Z, 
\quad \delta_{\rm ws}(Z)= 
\dim\langle Z\rangle_{\rm ws} -\dim Z.
$$

\begin{definition*}
{\rm Let $Z\subset M$. 

(i) A subvariety 
$A\subset Z$ is called {\it optimal\/} if it is maximal for 
its defect as a subvariety of $Z$. That is,
if $A\subset B\subset Z$ and $\delta(B)\le\delta(A)$ 
then $B=A$.

(ii) A subvariety $A\subset Z$ is called 
{\it geodesic optimal\/}  if it is maximal for its 
weakly special defect as a subvariety of $Z$. }

\end{definition*}

%While not the ``official'' statement, 
The following is formally
equivalent to the strongest form of ZP, namely the analogue for a mixed Shimura
variety of the conjectures of Zilber and Bombieri-Masser-Zannier (for
semi-abelian varieties and ${\mathbb G}_{\rm m}$), 
as shown in \cite{HABEGGERPILA}. (The notion here called ``geodesic optimal''
was earlier introduced as ``cd-maximal'' in  a different context in
\cite{POIZAT} in the setting of ${\mathbb G}_{\rm m}$.)

\begin{conj} [ZP]
Let $Z\subset M$. Then $Z$ has only finitely many optimal subvarieties.
\end{conj}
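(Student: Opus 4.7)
\medskip

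\noindent\textbf{Proof plan.}
The approach that has yielded every known partial case of ZP is the Pila--Zannier strategy, and I would adopt this framework: o-minimal counting on a definable parameter space of special subvarieties, combined with a functional transcendence statement of Ax--Schanuel type and a lower bound for the Galois orbit of an atypical intersection. I would work by induction on the defect $\delta(Z)$ (or on $\dim Z$), so that it suffices to bound the optimal $A \subset Z$ whose smallest special envelope $\langle A\rangle$ is not itself weakly special around $A$; equivalently, in view of Ax--Schanuel, to bound the geodesic-optimal subvarieties $A\subset Z$ for which $\langle A\rangle_{\rm ws}$ sits strictly atypically inside $\langle A\rangle$.

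The first ingredient is an Ax--Schanuel theorem for the mixed Shimura variety $M$, now available in considerable generality through work of Mok, Pila, Tsimerman, Gao, Chiu and others. Pulled back to a fundamental domain $\Delta$ in the uniformization $\pi\colon \widetilde M \to M$, Ax--Schanuel says that any complex-analytic component of $\widetilde Z \cap \widetilde S$ (with $\widetilde S$ a pre-weakly-special subvariety) that is atypically large must already be algebraic in $\widetilde M$, hence its image is contained in a proper weakly special subvariety adapted to $Z$. Iterating, I can reduce to the case where $Z$ is Hodge-generic in $M$, and the optimal $A$'s of interest then arise from strongly atypical special subvarieties $S\in \mathcal S$.

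Next I would parameterize the relevant countable family of special subvarieties $S$ of $M$ (by centers, Hecke data, torsion levels, etc.) as a definable subset of $\Delta$ in $\R_{\rm an,exp}$, and form the definable set $\Phi$ of parameters producing an atypical component in $Z$. The Pila--Wilkie counting theorem then says that algebraic points in $\Phi$ of bounded degree and height at most $T$ either grow polynomially in $T$, or lie on finitely many positive-dimensional semi-algebraic blocks; the blocks, by Ax--Schanuel, force weakly special constraints that contradict Hodge-genericity of $Z$. The resulting polynomial upper bound must then be confronted with a \emph{superpolynomial} lower bound for the Galois orbit of an atypical component $A$, measured against the arithmetic complexity of the associated $S$.

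The hard step, and the reason Conjecture 2.1 remains open in the full mixed Shimura setting, is precisely this last lower bound. In every special case where ZP has been approached --- $\mathbb{G}_{\rm m}^n$, abelian varieties, $Y(1)^n$, and indeed the $V^n \subset (Y\times E)^n$ of the present paper --- the needed Galois-orbit and height estimates are within reach of current techniques (Masser, Tsimerman, Binyamini--Schmidt--Yafaev, Pila--Shankar--Tsimerman and collaborators). In full generality one needs uniform height lower bounds for atypical intersections across families of special subvarieties of unbounded dimension, which goes well beyond what is currently known even for Andr\'e--Oort. This is where I would expect any serious attack on Conjecture 2.1 to stall, and is where the bulk of the new arithmetic input would have to be supplied.
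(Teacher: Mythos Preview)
The statement you were asked to prove is not proved in the paper: it is Conjecture~2.1, the Zilber--Pink conjecture, stated explicitly as an open conjecture. The paper makes no attempt to prove it in general, and in fact says outright that the authors ``are not able to prove ZP for $V^n$ (once $n\ge 3$)''; the whole point of the paper is to establish unconditionally the much weaker Theorem~1.1, which is shown in Proposition~2.1 to be a \emph{consequence} of ZP.

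Your proposal is therefore not comparable to any proof in the paper, because there is none. That said, your write-up is an accurate and honest summary of the Pila--Zannier strategy and of where it currently stalls: you correctly identify the Ax--Schanuel input, the o-minimal counting step, and the missing superpolynomial Galois-orbit lower bounds as the genuine obstruction. You also correctly note that this obstruction is exactly why the conjecture remains open. In that sense there is no ``gap'' to point to beyond the one you yourself flag; you have not proposed a proof so much as a diagnosis of why no proof exists, and that diagnosis is sound and consistent with the paper's own discussion in \S1 and \S2.
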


The ambient variety $Y^n\times E^n$ is an example
of a {\it weakly special subvariety} of a 
{\it mixed Shimura variety\/} (it is {\it special} precisely
if $E$ has CM). Namely, let 
$$
\mathcal E\rightarrow Y
$$ 
be the universal family over $Y$ (of elliptic curves if $Y$ is a modular curve,
or of abelian surfaces if $Y$ is a Shimura curve). 
Then $\mathcal E$ is a mixed Shimura variety (see e.g. \cite{GAO}),
in which $Y$ can be identified with the zero-section.
If $E$ is isomorphic to the fibre over $s\in Y$ then it
may be identified with this fibre, which is weakly special. 
Correspondingly, 
$Y^n\times E^n$ may be identified with a weakly special subvariety of 
$\mathcal E^n\times \mathcal E^n$.

It is well-known, see e.g. Pink \cite{PINK}, that ZP implies a similar statement for its
weakly special subvarieties, whose ``special subvarieties''
are simply the intersections of it with special subvarieties of the ambient mixed Shimura variety.
There are corresponding notions of smallest special and weakly special subvariety containing
a given subvariety, defect and weakly special defect, and ZP can be expressed
in terms of the corresponding notion of ``optimal'' as in 2.1; in the sequel the notation
$\langle\cdot\rangle$ and defects will always be
with respect to the ambient variety $Y^n\times E^n$. In particular, we have:

\begin{definition*}
{\rm The {\it (weakly) special subvarieties} of $Y^n\times E^n$, in the above sense,
are products of (weakly) special  subvarieties in $Y^n$ and $E^n$, where the 
``special subvarieties'' of $E^n$ are its torsion cosets.}
\end{definition*}

It follows then that, for $Z\subset Y^n\times E^n$,
$$
\langle Z\rangle=\langle \pi_{Y^n}(Z)\rangle_{Y^n}
\times \langle \pi_{E^n}(Z)\rangle_{E^n}
$$
and likewise for $\langle Z\rangle_{\rm ws}$.

\bigbreak

Given $V\subset Y\times E$, we consider ZP
for $V^n\subset Y^n\times E^n$.
If $x\in E^n$ is a $V$-image of a special point $s\in Y^n$
and $x$ is dependent then $x\in B$ for some proper special subvariety of $E^n$. Then
$(s,x)\in V^n\cap (\{s\}\times B)$, and since $\dim (\{s\}\times B)+\dim V^n<2n$
this  shows that any dependent image of a special point is an ``unlikely'' or ``atypical''
intersection in the sense of the Zilber-Pink conjecture.

The following shows that exemplary  special
graphs are optimal subvarieties of $V^n$, and hence 
that Theorem 1.1 is a consequence of ZP.
However, we are not able to prove ZP for $V^n$ 
(once $n\ge 3$).

\begin{prop}
An exemplary special graph in $V^n$ is an optimal subvariety of $V^n$.
\end{prop}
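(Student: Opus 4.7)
My plan is to reduce optimality in $V^n$ to the exemplary hypothesis on $W$: any $A\supset W$ in $V^n$ with $\delta(A)\le\delta(W)$ will be shown to force the existence of a special graph $A^*\supset A$ with $\pi_{E^n}(A^*)\subset B$, and hence $A^*=W$ by exemplarity, so $A=W$. The key ambient fact is that $V\to Y$, being an irreducible correspondence of curves, is finite surjective; consequently $\pi_{Y^n}:V^n\to Y^n$ is finite, and for any irreducible closed $X\subset V^n$ the image $\pi_{Y^n}(X)$ is closed, irreducible, of the same dimension as $X$.

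First I would compute $\delta(W)$ explicitly. The product decomposition $\langle Z\rangle=\langle \pi_{Y^n}(Z)\rangle\times\langle \pi_{E^n}(Z)\rangle$ recorded at the end of \S 2 gives $\langle W\rangle=S\times B$, and finiteness of $\pi_{Y^n}|_{V^n}$ gives $\dim W=\dim S$, whence $\delta(W)=\dim B$.

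Next, given $W\subset A\subset V^n$ with $\delta(A)\le\delta(W)$, set $T=\langle\pi_{Y^n}(A)\rangle$ and $C=\langle\pi_{E^n}(A)\rangle$, so that $\langle A\rangle=T\times C$ with $S\subset T$ and $B\subset C$. The finiteness bound $\dim A=\dim\pi_{Y^n}(A)\le\dim T$ yields
$$
\delta(A)=\dim T+\dim C-\dim A\ \ge\ \dim C\ \ge\ \dim B\ =\ \delta(W).
$$
Our hypothesis forces equalities throughout. Two consequences: $C=B$ (since $B\subset C$ are both special of the same dimension and $B$ is irreducible), and $\pi_{Y^n}(A)=T$ (a closed irreducible subvariety of $T$ of the same dimension as $T$).

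Finally, let $A^*$ be the component of $V^n\cap (T\times B)$ containing $A$. Then $\pi_{Y^n}(A^*)\supset\pi_{Y^n}(A)=T$ forces $\pi_{Y^n}(A^*)=T$, while $\pi_{E^n}(A^*)\subset B$; so $A^*$ is a special graph with $\pi_{E^n}(A^*)\subset B$ and $A^*\supset W$, and exemplarity of $W$ forces $A^*=W$. Hence $A\subset A^*=W$ and $A=W$, as required. I expect the only nuanced step is the collapse of the chain of inequalities to yield both $C=B$ and $\pi_{Y^n}(A)=T$ simultaneously; everything else is essentially formal from the product decomposition of $\langle\cdot\rangle$ and the finiteness of $\pi_{Y^n}|_{V^n}$.
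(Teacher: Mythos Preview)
Your argument is correct and is essentially the paper's own proof: both compute $\delta(W)=\dim B$ from $\langle W\rangle=S\times B$ and $\dim W=\dim S$, then collapse the chain $\delta(A)\ge\dim C\ge\dim B=\delta(W)$ to force $C=B$ and $\dim A=\dim T$, and invoke exemplarity. You are slightly more explicit than the paper in two places --- spelling out that $E$ proper makes $\pi_{Y^n}|_{V^n}$ finite (hence images are closed and equi-dimensional), and passing to the component $A^*\supset A$ of $V^n\cap(T\times B)$ rather than asserting directly that $A$ is a special graph --- but these are clarifications of the same argument, not a different route.
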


\begin{proof}
Let $W\subset V^n\cap (S\times B)$ be an exemplary special graph with $\pi_{Y^n}(W)=S$
and $B=\langle\pi_{E^n}(W)\rangle$. Then $\dim W=\dim S$ and the smallest special 
subvariety of $Y^n\times E^n$ containing $W$ is 
$S\times B$. Hence the defect of $W$ is
$$
\delta(W)=\dim \langle W\rangle -\dim W = 
\dim S +\dim B -\dim W=\dim B.
$$
If $W$ were not optimal, it would be contained in 
some larger subvariety $W'\subset V^n$
of the same, or lower defect. Write 
$$
\langle W'\rangle =S'\times B'.
$$ 
Then
$B\subset B'$ and $\dim W'\le \dim S'$ and
$$
\delta(W')=\dim \langle W'\rangle -\dim W' = 
\dim S' +\dim B' -\dim W'.
$$
If $\delta(W')\le \delta(W)$ we must have $B'=B$ and 
$\dim W'=\dim S'$, which
would mean that $W'$ is a special graph in $V^n$ on $S'$, 
containing $W$, projecting into $B$.
But by the maximality of $W$ we have $W'=W$. 
\end{proof}

We will need the ``weak'' analogue of the above. 
A {\it weakly special graph in $V^n$\/} is a component $W\subset V^n\cap (S\times B)$
where $S, B$ are weakly special subvarieties. It is {\it exemplary\/}
if, taking $B=\langle \pi_{E^n}(W)\rangle_{\rm ws}$, there is no
weakly special graph $W'$ strictly larger than $W$ with $\pi_{E^n}(W')\subset B$.

%\red{We will need the ``weak'' analogue of the above. 
%A (proper) weakly dependent weakly special image will mean
%a $V$-image $W$ of a weakly special $T\subset Y^n$ which is
%contained in a (proper) weakly special $C\subset E^n$.
%It it {\it maximal\/} if $C=\langle \pi_{E^n}(W)\rangle_{\rm ws}$
%and there is no larger weakly dependent weakly special image
%$W'$ with $\langle \pi_{E^n}(W')\rangle_{\rm ws}=C$.}

\begin{prop}
An exemplary weakly special graph in $V^n$ is a
geodesic optimal subvariety of $V^n$.
\end{prop}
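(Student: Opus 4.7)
My plan is to mimic the proof of the preceding proposition verbatim, with ``weakly special'' replacing ``special'' throughout. The only facts from that argument that I need to transfer to the present setting are (a) the product decomposition
\[
\langle Z\rangle_{\rm ws}=\langle \pi_{Y^n}(Z)\rangle_{\rm ws}\times\langle \pi_{E^n}(Z)\rangle_{\rm ws}
\]
for $Z\subset Y^n\times E^n$, which the excerpt explicitly records, and (b) the fact that $V\to Y$ is generically finite (as $V$ is a correspondence), so $\pi_{Y^n}\colon V^n\to Y^n$ is generically finite as well.

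Take $W\subset V^n\cap(S\times B)$ exemplary, with $S=\pi_{Y^n}(W)$ weakly special and $B=\langle \pi_{E^n}(W)\rangle_{\rm ws}$. Generic finiteness gives $\dim W=\dim S$, and the product formula gives $\langle W\rangle_{\rm ws}=S\times B$, so
\[
\delta_{\rm ws}(W)=\dim S+\dim B-\dim W=\dim B.
\]

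Suppose for contradiction that $W$ is not geodesic optimal in $V^n$: some $W\subsetneq W'\subset V^n$ satisfies $\delta_{\rm ws}(W')\le\delta_{\rm ws}(W)=\dim B$. Write $\langle W'\rangle_{\rm ws}=S'\times B'$ with $S\subseteq S'$ and $B\subseteq B'$. Generic finiteness gives $\dim W'\le \dim \pi_{Y^n}(W')\le\dim S'$, hence
\[
\delta_{\rm ws}(W')=\dim S'+\dim B'-\dim W'\ge \dim B'\ge \dim B.
\]
Equality throughout forces $\dim W'=\dim S'$ and $B'=B$ (weakly special subvarieties being irreducible). Taking $W''$ to be a component of $V^n\cap(S'\times B)$ that contains $W'$, the dimension equalities together with generic finiteness yield $\pi_{Y^n}(W'')=S'$, so $W''$ is a weakly special graph strictly larger than $W$ with $\pi_{E^n}(W'')\subset B=\langle \pi_{E^n}(W)\rangle_{\rm ws}$, contradicting the exemplary hypothesis on $W$.

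No step here presents a real obstacle: the proof is a purely formal repetition of the preceding proposition, with generic finiteness of $\pi_{Y^n}$ on $V^n$ and the product decomposition of $\langle\cdot\rangle_{\rm ws}$ playing precisely the roles their special analogues played there. If there is any subtlety, it is only to record up front that the factorization of $\langle\cdot\rangle_{\rm ws}$ through projections, which drives the defect computation, holds in the weakly special setting just as in the special one.
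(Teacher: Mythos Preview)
Your proof is correct and follows exactly the approach the paper intends: the paper's own proof is literally the two words ``The same,'' i.e.\ a verbatim repetition of the argument for the preceding proposition with $\langle\cdot\rangle_{\rm ws}$ and $\delta_{\rm ws}$ in place of $\langle\cdot\rangle$ and $\delta$. Your extra care in passing from $W'$ to a component $W''$ of $V^n\cap(S'\times B)$ is harmless but unnecessary, since $\dim W'=\dim S'$ and properness of $E^n$ already force $\pi_{Y^n}(W')=S'$ and hence $W'=W''$.
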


\begin{proof}
The same.
\end{proof}

The Ax-Schanuel theorem only detects weakly special 
subvarieties, and we thus need to show (as has
already been shown in several other settings,
including for all pure Shimura varieties by 
Daw-Ren \cite{DAWREN})
that optimal subvarieties are geodesic optimal.
For this we establish the ``defect condition''.

\medskip
\noindent
{\bf Definition.\/} A weakly special subvariety $X$ of a mixed Shimura variety has 
the {\it defect condition\/} if,
for $A\subset B\subset X$, we have
$$
\delta(B)-\delta_{\rm ws}(B)\le \delta(A)-\delta_{\rm ws}(A),
$$
the defects being with respect to the special and weakly special subvarieties of $X$.

\begin{prop}
Let $S$ be a pure Shimura variety and $T$ an abelian 
variety.
Then $S\times T$ has the defect condition.
\end{prop}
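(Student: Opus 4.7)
The plan is to reduce the defect condition on $X=S\times T$ to a separate statement on each factor, then invoke Daw-Ren for $S$ and handle the abelian case directly. By the same argument as for $Y^n\times E^n$ above, the (weakly) special subvarieties of $S\times T$ are exactly the products of (weakly) special subvarieties of $S$ with torsion cosets (resp.\ cosets) of $T$. Hence for $Z\subset S\times T$,
\[
\langle Z\rangle=\langle\pi_S(Z)\rangle_S\times\langle\pi_T(Z)\rangle_T,
\]
and similarly for $\langle Z\rangle_{\rm ws}$. The term $\dim Z$ appears with the same sign in $\delta(Z)$ and $\delta_{\rm ws}(Z)$, so it cancels from the difference, yielding
\[
\delta(Z)-\delta_{\rm ws}(Z) = \bigl(\dim\langle\pi_S(Z)\rangle_S-\dim\langle\pi_S(Z)\rangle_{S,{\rm ws}}\bigr) + \bigl(\dim\langle\pi_T(Z)\rangle_T-\dim\langle\pi_T(Z)\rangle_{T,{\rm ws}}\bigr).
\]
Because $A\subset B$ forces $\pi_S(A)\subset\pi_S(B)$ and $\pi_T(A)\subset\pi_T(B)$, it is enough to prove that each summand is nonincreasing under inclusion on the corresponding factor.

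For $S$ this is the defect condition for pure Shimura varieties, established by Daw-Ren \cite{DAWREN}. For $T$ I would argue directly. Given $Y\subset T$, write the smallest coset containing $Y$ as $y_0+G(Y)$, where $G(Y)$ is the abelian subvariety generated by $Y-Y$; and the smallest torsion coset as $\tau+H(Y)$ with $\tau$ torsion and $G(Y)\subset H(Y)$. After projecting to the quotient $T/G(Y)$, the set $Y$ collapses to a single point $\bar y_0$, and $H(Y)/G(Y)\subset T/G(Y)$ is characterized as the smallest abelian subvariety $K$ such that $\bar y_0$ is torsion modulo $K$. Thus the defect gap of $Y$ in $T$ equals
\[
\dim\langle Y\rangle_T-\dim\langle Y\rangle_{T,{\rm ws}}=\dim H(Y)-\dim G(Y)=\dim\bigl(H(Y)/G(Y)\bigr).
\]

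Now let $A\subset B$ in $T$, so that $G(A)\subset G(B)$ and there is a quotient homomorphism $q\colon T/G(A)\to T/G(B)$ sending $\bar a$ to $\bar b$. Torsionality of $\bar a$ modulo $H(A)/G(A)$ descends under $q$ to torsionality of $\bar b$ modulo $q\bigl(H(A)/G(A)\bigr)\subset T/G(B)$, whence by minimality of $H(B)/G(B)$,
\[
\dim\bigl(H(B)/G(B)\bigr)\;\le\;\dim q\bigl(H(A)/G(A)\bigr)\;\le\;\dim\bigl(H(A)/G(A)\bigr).
\]
This gives the defect condition on $T$, completing the reduction. The only non-formal step is this short quotient argument on the abelian factor, which is elementary; I would expect no real obstacle.
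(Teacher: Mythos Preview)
Your proof is correct and follows essentially the same approach as the paper: both reduce the defect condition on $S\times T$ to the defect conditions on the factors via the product decomposition of (weakly) special hulls, then cite Daw--Ren for the pure Shimura factor. The only difference is that for the abelian factor $T$ the paper simply cites \cite{HABEGGERPILA}, Proposition~4.3, whereas you supply a short direct quotient argument; this is a cosmetic distinction rather than a different route.
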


\begin{proof}
For an abelian variety (as well as for $\mathbb G_{\rm m}^n$
and products of modular curves) the defect condition is established
in \cite{HABEGGERPILA}, Proposition 4.3,
and for a general pure Shimura variety  in \cite{DAWREN}, 4.4.
Since the (weakly) special subvarieties of $S\times T$ are products of 
(weakly) special subvarieties of the factors, we have
$$
\langle A\rangle=\langle \pi_S(A)\rangle_{S}\times \langle \pi_T(A)\rangle_T
$$
so that
$$
\delta(A)-\delta_{\rm ws}(A)=\delta(\pi_S(A))-\delta_{\rm ws}(\pi_S(A))+\delta(\pi_T(A))-\delta_{\rm ws}(\pi_T(A)),
$$
and likewise for $B$, and the defect condition for $S\times T$ follows from the
defect conditions in $S$ and $T$ by addition.
\end{proof}

It is conjectured in \cite{HABEGGERPILA}
that the defect condition holds in all mixed Shimura varieties. Presumably a proof can't be too far 
from  the above, as the weakly specials are ``nearly'' products, i.e. they are flat over a pure special.
%See a partial verification in \cite{BARROERODILL} for the universal family of abelian varieties.

\begin{prop}
An optimal subvariety is geodesic optimal.
\end{prop}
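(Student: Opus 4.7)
The plan is to derive this immediately from the defect condition for $Y^n\times E^n$ established in Proposition 2.4. Unwinding the definition, $A\subset Z$ fails to be geodesic optimal precisely when there exists some $A\subsetneq B\subset Z$ with $\delta_{\rm ws}(B)\le \delta_{\rm ws}(A)$. So to prove that an optimal $A$ is geodesic optimal, I would assume such a $B$ exists and show that the same $B$ already violates the optimality of $A$, i.e. that $\delta(B)\le \delta(A)$ as well.

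The key step is a single application of the defect inequality. Since $A\subset B$ in $Y^n\times E^n$, Proposition 2.4 gives
$$
\delta(B)-\delta_{\rm ws}(B)\le \delta(A)-\delta_{\rm ws}(A),
$$
and rearranging yields
$$
\delta(B)\le \delta(A)+\bigl(\delta_{\rm ws}(B)-\delta_{\rm ws}(A)\bigr)\le \delta(A),
$$
where the last inequality is exactly our hypothesis on $B$. Combined with $A\subsetneq B\subset Z$, this contradicts the optimality of $A$. Hence no such $B$ exists and $A$ is geodesic optimal.

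In this formulation there is essentially no obstacle: the real content has been absorbed into Proposition 2.4, which packages the additivity of the $\delta-\delta_{\rm ws}$ quantity over the pure Shimura factor $Y^n$ and the abelian factor $E^n$. One should only note that when $A\subset B\subset Z\subset Y^n\times E^n$, the defects on the left and right of the inequality are all computed with respect to the same ambient variety, so the implication from optimal to geodesic optimal is the same formal consequence observed for example in \cite{HABEGGERPILA} and in \cite{DAWREN}.
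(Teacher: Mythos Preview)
Your proof is correct and is precisely the argument the paper has in mind: the paper's own proof simply says the statement follows formally from the defect condition (Proposition 2.4), as in \cite{HABEGGERPILA}, and you have just written out that formal deduction.
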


\begin{proof}
This follows formally once one has the defect condition, as in \cite{HABEGGERPILA}.
\end{proof}

\section{Ax-Schanuel}

The {\it Ax-Schanuel property} for the uniformization map 
$$
u_M: D\rightarrow M
$$ 
realizing a mixed Shimura variety $M$ as a quotient of a suitable Hermitian symmetric 
domain $D$ by a discrete arithmetic group $\Gamma$  is a functional transcendence statement 
for $u_M$ analogous to the classical
Ax-Schanuel theorem
for the exponential function $\exp: \C\rightarrow \C^{\times}$.
For discussion and proof of such results see \cite{MPT, GAO}.
Such a result implies a corresponding statement for each weakly special subvariety $X\subset M$,
uniformized by an irreducible component of $u_M^{-1}(X)$.

The Ax-Schanuel result we need is for (all the cartesian powers of) the uniformization
$$
u: \Hh\times\C\rightarrow Y\times E.
$$
We will use the same notation
$u: \Hh^n\times\C^n\rightarrow Y^n\times E^n$ for cartesian powers.
Since this is the uniformization 
corresponding to a weakly special subvariety of $\mathcal E^{2n}$, the result follows
from the Ax-Schanuel statement for the uniformization
$$
\Hh^{2n}\times\C^{2n} \rightarrow \mathcal E^{2n}
$$
and since $Y^{2n}$, the ``pure'' Shimura variety underlying ${\mathcal E}^{2n}$, 
is a special subvariety of $\mathcal A_{g}$, the Siegel modular variety of
principally polaried abelian varieties, when $g\ge 2n$ the required
Ax-Schanuel follows from the corresponding statement for
the universal family $\mathcal{X}_{g}$ of abelian varieties
over $\mathcal{A}_{g}$, namely the Ax-Schanuel theorem for 
the uniformization
$$
\Hh_g\times \C^g\rightarrow \mathcal{X}_g.
$$
This theorem is due to Gao \cite{GAO}, Theorem 1.1, extending, for $\mathcal{A}_g$, 
the result for a general pure Shimura variety in \cite{MPT}, Theorem 1.1.

We will (as usual in ZP applications) use only the 
``two-sorted'' form, which we now state for the uniformization 
$u: \Hh^n\times\C^n\rightarrow Y^n\times E^n$, after noting the following convention.

Strictly speaking $\Hh^n$ has 
no ``algebraic subvarieties''; by an \emph{algebraic subvariety\/}
of $U$, where $U\subset \Hh^n\times\C^n$ is a weakly special subvariety, 
we will mean an irreducible analytic component of the intersection of $U$
with an algebraic subvariety (in the usual sense)
of the ambient $\C^n\times\C^n$.

\begin{thm}
Let $U'$ be a weakly special subvariety of $\Hh^n\times \C^n$ with image 
$u(U')=X'$ a weakly special subvariety of $Y^n\times E^n$.
Let $Z\subset X'$, $A\subset U'$ be algebraic varieties, 
and $\Omega$ an irreducible analytic component
of $A\cap u^{-1}(Z)$. Then
$$
\dim\Omega=\dim Z +\dim A-\dim X'
$$
unless $\Omega$ is contained in a proper weakly special subvariety of $U'$.\ \qed
\end{thm}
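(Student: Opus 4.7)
The plan is to deduce Theorem 3.1 directly from Gao's Ax--Schanuel theorem for the universal abelian variety $\mathcal{X}_g \to \mathcal{A}_g$ (\cite{GAO}, Theorem 1.1), via the embeddings already noted in the paper. I would first record the standard reduction that Ax--Schanuel for a mixed Shimura variety $M$ with uniformization $u_M : D_M \to M$ implies the corresponding statement for any weakly special subvariety $X \subset M$: writing $\widetilde{X}$ for a component of $u_M^{-1}(X)$, the (weakly) special subvarieties of $X$ are exactly the restrictions of those of $M$, algebraic subvarieties transfer literally by our convention, and the codimension of $X$ in $M$ cancels from both sides of the expected dimension formula, so that atypicality inside $\widetilde{X}$ coincides with atypicality in $D_M$.

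The chain of embeddings realizes $Y^n \times E^n$ as a weakly special subvariety of $\mathcal{E}^{2n}$ (identifying $Y^n$ with the zero section in the first $\mathcal{E}^n$-factor and $E^n$ with the fixed fibre over a chosen $s \in Y^n$ in the second), and in turn $\mathcal{E}^{2n}$ as a weakly special subvariety of $\mathcal{X}_g$ for any $g \geq 2n$, because $Y^{2n}$ is a special subvariety of $\mathcal{A}_g$. Our map $u$ is then the restriction of $u_g : \Hh_g \times \C^g \to \mathcal{X}_g$ to a component of the preimage of $X'$; we may assume $U'$ sits inside such a component. Given algebraic $A \subset U'$ and $Z \subset X'$ with $\Omega$ a component of $A \cap u^{-1}(Z)$, the same $\Omega$ is a component of $A \cap u_g^{-1}(Z)$ computed inside $\Hh_g \times \C^g$.

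Gao's theorem then yields either the ambient expected dimension for $\Omega$, which --- after the codimension of $X'$ in $\mathcal{X}_g$ cancels --- translates to $\dim \Omega = \dim Z + \dim A - \dim X'$, or $\Omega$ is contained in a proper weakly special subvariety of the component of $u_g^{-1}(X')$, which by the first reduction means a proper weakly special subvariety of $U'$. The main obstacle, though essentially packaged into Gao's statement, is the bookkeeping involved in the two steps of descent: one must check that the proper weakly special subvariety produced in $\mathcal{X}_g$ genuinely cuts out a proper piece of $U'$ and not all of it, and that the transverse algebraic directions used to pad $A$ and $Z$ up to the ambient do not spuriously alter the defect count. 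Both are exactly what the identification of (weakly) special and algebraic subvarieties of a weakly special subvariety with the restrictions of those of the ambient guarantees.
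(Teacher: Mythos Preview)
Your proposal is correct and is essentially the same argument the paper gives: the paper deduces Theorem~3.1 from Gao's Ax--Schanuel for $\mathcal{X}_g$ by the same chain of reductions (weakly special subvarieties inherit Ax--Schanuel from the ambient, $Y^n\times E^n$ is weakly special in $\mathcal{E}^{2n}$, and $\mathcal{E}^{2n}$ sits in $\mathcal{X}_g$ via $Y^{2n}\subset\mathcal{A}_g$), with the statement itself marked $\qed$ after that discussion. You have simply unpacked the codimension bookkeeping a bit more explicitly than the paper does.
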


As in \cite{HABEGGERPILA, DAWREN}, this can be reformulated in terms of a suitable notion of
``optimality'', for which we adopt the terminology used by Daw-Ren \cite{DAWREN}, \S5.7-5.9,
to distinguish it from ``optimality'' as above in \S2.

\medskip
\noindent
{\bf Definition.\/} Let $Z\subset Y^n\times E^n$ be a subvariety.

(i) An \emph{intersection component} of $u^{-1}(Z)$ is an irreducible analytic component
of the intersection of $u^{-1}(Z)$ with an algebraic subvariety of $\Hh^n\times \C^n$.

(ii) If $A$ is an intersection component of $u^{-1}(Z)$ with Zariski closure $\langle A\rangle_{\rm Zar}$
we define its \emph{Zariski defect} to be
$$
\delta_{\rm Zar}(A)=\dim \langle A\rangle_{\rm Zar} -\dim A.
$$

(iii) An intersection component $A$ of $u^{-1}(Z)$ is called \emph{Zariski optimal\/} if one 
cannot find a larger intersection component  of $u^{-1}(Z)$ which does not 
increase the Zariski defect.

(iv)  An intersection component $A$ of $u^{-1}(Z)$ is called
\emph{geodesic} if $A$ is a component of $u^{-1}(Z)\cap \langle A\rangle_{\rm Zar}$
and $\langle A\rangle_{\rm Zar}$ is weakly special.

\medskip

\begin{prop} Let $Z\subset Y^n\times E^n$ be a subvariety.
A Zariski optimal component  of $u^{-1}(Z)$ is geodesic.
\end{prop}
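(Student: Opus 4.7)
The plan is to follow the Habegger--Pila and Daw--Ren template: use Ax--Schanuel inside the smallest weakly special subvariety containing $A$ to pin down $\dim A$, and then invoke Zariski optimality to collapse that weakly special onto the Zariski closure of $A$.

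Set $B = \langle A\rangle_{\rm Zar}$ and let $W$ be the smallest weakly special subvariety of $\Hh^n\times\C^n$ containing $A$. I would begin with two preliminary remarks. First, the condition that $A$ be a component of $u^{-1}(Z)\cap \langle A\rangle_{\rm Zar}$ is automatic for any intersection component: if $A$ is a component of $u^{-1}(Z)\cap B_0$ with $B_0$ algebraic, then $B\subset B_0$ by minimality of $B$, and the component of $u^{-1}(Z)\cap B$ through $A$ is squeezed inside $A$. Second, since weakly special subvarieties of $\Hh^n\times\C^n$ are themselves complex algebraic --- M\"obius graphs and coordinate fixings in the $\Hh^n$ direction, rational affine subspaces in the $\C^n$ direction --- the same minimality yields $B\subset W$. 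The whole task therefore reduces to proving $B = W$.

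Next I would apply Theorem 3.1 with $U' = W$, $X' = u(W)$, the algebraic subvariety of $U'$ being $B$, and $Z_0$ the irreducible component of $Z\cap u(W)$ through $u(A)$. The alternative that $A$ lies in a proper weakly special subvariety of $W$ is excluded by the minimality of $W$, so
$$
\dim A = \dim Z_0 + \dim B - \dim W.
$$
Now let $\tilde A$ be the analytic component of $u^{-1}(Z)\cap W$ containing $A$; it is again an intersection component of $u^{-1}(Z)$ since $W$ is algebraic. The restriction $u|_W\colon W\to u(W)$ is an unramified covering, so $\tilde A$ is a local-biholomorphic lift of $Z_0$, forcing $\dim\tilde A = \dim Z_0$. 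Combining $\langle\tilde A\rangle_{\rm Zar}\subset W$ with this gives $\delta_{\rm Zar}(\tilde A)\le \dim W - \dim Z_0$, while the displayed equality gives $\delta_{\rm Zar}(A) = \dim B - \dim A = \dim W - \dim Z_0$. Hence $\delta_{\rm Zar}(\tilde A)\le\delta_{\rm Zar}(A)$, and Zariski optimality together with $\tilde A\supseteq A$ forces $\tilde A = A$. Feeding $\dim A = \dim Z_0$ back into the Ax--Schanuel equality yields $\dim B = \dim W$, and since $B\subset W$ with both irreducible, $B = W$, which is weakly special.

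The one mildly delicate point --- the complex algebraicity of weakly special subvarieties of the universal cover, needed so that $\langle A\rangle_{\rm Zar}\subset W$ and the Zariski defects are directly comparable --- is standard for both modular and Shimura curves but worth checking explicitly before invoking Ax--Schanuel.
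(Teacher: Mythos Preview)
Your argument is correct and is precisely the formal deduction of Proposition~3.2 from Theorem~3.1 that the paper invokes by citing \cite{HABEGGERPILA} (below 5.12): the paper gives no independent proof, and your write-up reproduces the Habegger--Pila template it points to.
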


\begin{proof}
The equivalence of 3.1 and 3.2 is purely formal and the proof is carried out in
\cite{HABEGGERPILA}, below 5.12.
\end{proof}

\noindent
{\bf Definition.\/} A {\it M\"obius subvariety\/} of $\Hh^n$ is an algebraic subvariety
defined by setting some coordinates constant, and relating some other pairs
of coordinates by elements of ${\rm SL}_2(\R)$.

\medskip

We let $F$ denote a standard fundamental domain 
%for the action of $\Gamma$ on $\Hh^n\times \C^n$
for the uniformization of $Y\times E$. The uniformization map restricted to $F$
is definable (in this case by results of Peterzil-Starchenko \cite{PSDMJ}),
and the M\"obius subvarieties of $\Hh^n$ form a definable family.

This means that if we consider the definable family of subvarieties of 
$\Hh^n\times\C^n$ comprising all products of ``M\"obius suvarieties'' of $\Hh^n$
and linear subvarieties of $\C^n$, and define the set of
Zariski optimal ones by the difference of their dimension and
dimension of intersection with $u^{-1}(V)$,
just among these which go through $F$, we will get the slopes
(up to ${\rm SL}_2(\Z)$ and $\Lambda$) of all geodesic optimal components.
This then implies the finiteness of such slopes in $Y^n\times E^n$,
and any geodesic optimal component of $V^n$ will have some
pre-image component going through $F$.

We want the corresponding finiteness for the particular type of components
we consider. Namely, if $W$ is a dependent special graph, we consider
a component $U$ of its pre-image in $\Hh^n\times \C^n$. It is a component
of the intersection of $u^{-1}(V^n)$ with suitable pre-image $M\times L$  of 
$\langle W\rangle =S\times B$, and is thus a geodesic component
which projects onto $M$ and thus has $\dim U=\dim M$.

We need to observe that, if Zariski optimal, such a component comes from a 
maximal dependent (weakly) special image, i.e. something of the same form.
%
%\begin{prop}
%Let $U$ be of the following type: it is a component of $M\times L$ intersecting
%$\pi^{-1}(V^n)$, where $M$ is Mobius and $L$ is linear, which
%projects onto $M$.
%
%If $U$ is maximal of this type for the given $L$ then $L$ (and $M$)
%are weakly special and $U$ is Zariski optimal.
%
%A Zariski optimal component $U'$ containing $U$ is of the same type.
%\end{prop}
%
%\begin{proof} We have
%$$
%\delta_{\rm Zar}(U) \le \dim L.
%$$
%Suppose that $U\subset U'$, with $U'$ Zariski optimal.
%Then $U'$ is geodesic, and is a component of
%the intersection of $\pi^{-1}(V)$ with weakly special $M'\times L'$, 
%and $M'\times L'$ is its Zariski closure. 
%Then
%$$
%\delta_{\rm Zar} (U')=\dim M'+\dim L' -\dim U'
%$$
%and $\dim U' \le \dim M'$. If $\partial(U')\le \partial(U)$
%we must have $L=L'$ and $\dim U'=\dim M'$ so that
%$U'$ is a pre-image of a ``dependent weakly special image''.
%\end{proof}
In fact we need something further along these lines in the proof of 1.1, 
in order to get from ``something positive-dimensional algebraic'' to a component
of the right form.

\begin{prop}\label{zardef}
Let $U$ be of the following type: it is a component of $A\times L$ intersecting
$u^{-1}(V^n)$, where $A$ is algebraic, and $L$ is linear which projects onto $A$.

If $U$ is maximal of this type for the given $L$ then $L$ (and $A$)
are weakly special and $U$ is Zariski optimal.

%A Zariski optimal component $U'$ containing $U$ is of the same type.
\end{prop}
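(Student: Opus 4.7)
The plan is to apply the Ax--Schanuel theorem (Theorem 3.1) to $U$ as an intersection component of $u^{-1}(V^n)$ with the algebraic subvariety $A\times L$, working in the ambient $\Hh^n\times\C^n$ (a weakly special uniformization of $Y^n\times E^n$). Theorem 3.1 gives the dichotomy: either
$$
\dim U = \dim V^n + \dim(A\times L) - \dim(Y^n\times E^n) = \dim A + \dim L - n,
$$
or $U$ is contained in some proper weakly special subvariety of $\Hh^n\times\C^n$. Weakly special subvarieties of this ambient decompose as products $M\times\Lambda$, with $M\subset\Hh^n$ weakly special (``M\"obius'') and $\Lambda\subset\C^n$ a linear weakly special subvariety, so any enveloping weakly special is already of the same ``type'' as $A\times L$.

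If $U$ lies in a proper weakly special $M^*\times\Lambda^*$, I would take $\langle U\rangle_{\rm ws}=M^*\times\Lambda^*$ to be minimal with this property; then $\pi_{\Hh^n}(U)$ is Zariski dense in $M^*$ and $\pi_{\C^n}(U)$ is Zariski dense in $\Lambda^*$. Since $U$ projects onto $A$ we get $A\subset M^*$, while $\pi_{\C^n}(U)\subset L\cap\Lambda^*$. If either $A\subsetneq M^*$ or $L\neq \Lambda^*$, then the component of $(M^*\times\Lambda^*)\cap u^{-1}(V^n)$ containing $U$ is of the required ``type'' and, by the Ax--Schanuel dimension count applied to this new weakly special ambient, strictly enlarges $U$, contradicting maximality for the given $L$. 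Hence $A=M^*$ and $L=\Lambda^*$, and both are weakly special. The generic horn of the dichotomy is treated symmetrically: if $A$ or $L$ were not already weakly special, replacing it by its weakly special hull enlarges the algebraic ambient without changing the Ax--Schanuel dimension, again contradicting maximality.

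For Zariski optimality, suppose there were a strictly larger intersection component $U\subsetneq U'$ of $u^{-1}(V^n)$ with $\delta_{\rm Zar}(U')\le\delta_{\rm Zar}(U)$. Writing $\langle U'\rangle_{\rm Zar}$ using the product structure of $\Hh^n\times\C^n$ decomposes it as $A'\times L'$, with $A'$ the Zariski closure of $\pi_{\Hh^n}(U')$ (algebraic) and $L'$ the Zariski closure of $\pi_{\C^n}(U')$ (linear, since Zariski closures of subsets of $\C^n$ lying in an affine structure respect the linear-type hypothesis in our specific setting). Thus $U'$ is an intersection component of the same ``type'' as $U$ but strictly larger, contradicting the maximality hypothesis. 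The main obstacle I anticipate will be the step in which Ax--Schanuel forces $L$ to be weakly special: one must verify that replacing $L$ by its weakly special hull genuinely produces a strict enlargement of the intersection component, which uses essentially that $U$ projects onto $A$ so that the linear data $L$ is not extraneous to $U$.
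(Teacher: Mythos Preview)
Your argument has a genuine gap in the Zariski-optimality step. You write $\langle U'\rangle_{\rm Zar}=A'\times L'$ with $L'$ linear, but for an arbitrary intersection component $U'$ the Zariski closure has no reason to be a product at all, and even the Zariski closure of $\pi_{\C^n}(U')$ need not be a \emph{linear} subvariety. Your parenthetical justification (``Zariski closures \ldots\ respect the linear-type hypothesis'') is not correct: a non-linear algebraic curve in $\C^n$ can perfectly well occur as such a projection. So $U'$ is not of the required ``type'', and there is no contradiction with maximality.

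The paper repairs this by not starting from an arbitrary larger $U'$ of no greater defect, but from a \emph{Zariski optimal} $U'\supseteq U$ (one always exists). Then Proposition~3.2 forces $U'$ to be geodesic, so $\langle U'\rangle_{\rm Zar}=A'\times L'$ with $A',L'$ weakly special; in particular $L'$ is automatically linear. Now one compares defects directly: $\dim U=\dim A$ gives $\delta_{\rm Zar}(U)\le\dim L$, while $\dim U'\le\dim A'$ gives $\delta_{\rm Zar}(U')\ge\dim L'$; together with $L\subset L'$ and $\delta_{\rm Zar}(U')\le\delta_{\rm Zar}(U)$ one gets $L=L'$ and $\dim U'=\dim A'$. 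Thus $U'$ is of the same type \emph{with the same $L$}, so maximality gives $U=U'$, and $A=A'$, $L=L'$ are weakly special. Note that the equality $L'=L$ is essential: your maximality hypothesis is ``for the given $L$'', so finding a larger component of the same shape with a different linear part would not contradict anything. Your first paragraph (the direct Ax--Schanuel dichotomy and the ``generic horn'') suffers from the same issue: enlarging the algebraic ambient $A\times L$ to a weakly special hull need not enlarge the component $U$ itself, so no contradiction with maximality is produced that way.
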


\begin{proof}
We have $\dim U=\dim A$ and so
$$
\delta_{\rm Zar}(U) \le \dim L.
$$
Suppose that $U\subset U'$, with $U'$ Zariski optimal, 
and hence geodesic optimal, with $U'$ a component of 
the intersection of $u^{-1}(V^n)$ with weakly special $A'\times L'$, 
and $A'\times L'$ is its Zariski closure. 
Then 
$$
\delta_{\rm Zar}(U')=\dim A'+\dim L' -\dim U'.
$$
But $\dim U' \le \dim A'$ and $L\subset L'$. 
If 
$$
\delta_{\rm Zar}(U')\le \delta_{\rm Zar}(U)
$$
we must have $L=L'$ and $\dim U'=\dim A'$ so that
$U'$ is a pre-image of a ``dependent weakly special image''.
By the maximality of $U$ we have $U=U'$ and then $L=L'$
and $A=A'$ are weakly special.
\end{proof}

Now we get the finiteness statement. 

\begin{prop}\label{finitesp}
For each $k$ there are only finitely many strongly special subvarieties in $Y^k$ which have a 
$V$-image which lies in
any proper weakly special in $E^k$
\end{prop}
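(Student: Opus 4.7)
The plan is to deduce the statement from the finiteness of Zariski optimal slopes established in the discussion preceding the proposition, using Proposition~\ref{zardef} as the bridge.

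Suppose $S\subset Y^k$ is strongly special with some $V$-image contained in a proper weakly special $B\subset E^k$; this gives a component $W$ of $V^k\cap(S\times B)$ with $\pi_{Y^k}(W)=S$ and $\pi_{E^k}(W)\subset B$. First I would lift: pick a component $U\subset u^{-1}(W)$ meeting the fundamental domain $F$. Because $S$ and $B$ are (weakly) special, the preimage $u^{-1}(S\times B)$ decomposes into weakly special products $M\times L$ (a M\"obius subvariety of $\Hh^k$ times a linear subvariety of $\C^k$), and we may take $U\subset M\times L$ with $L$ proper linear.

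Next I would enlarge: with $L$ held fixed, take a component $U^\ast\supseteq U$ of $u^{-1}(V^k)\cap(A\times L)$ for some algebraic $A\supseteq M$, maximal with this property. By Proposition~\ref{zardef}, both $A$ and $L$ are then weakly special and $U^\ast$ is Zariski optimal. Since $U^\ast$ meets $F$, the finite-slopes result from the preceding discussion implies that $(A,L)$ lies in one of finitely many $\Gamma$-orbits, each descending to a single weakly special $S'\times B'\subset Y^k\times E^k$ with $B'$ proper. Thus $S\subseteq S'$ for $S'$ drawn from a finite list.

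The main obstacle is passing from ``$S\subseteq S'$ with $S'$ in a finite list of weakly specials'' to ``$S$ itself in a finite list of strongly specials''. For this I would use that strong-specialness pins the constant coordinates of the M\"obius lift $M$ to $\Gamma$-translates of CM points, while the maximality in Proposition~\ref{zardef} for the fixed proper linear $L$ prevents us from freeing any such constant: if $A\supsetneq M$ unfroze some fixed coordinate, then the corresponding $V$-image coordinate in $\C^k$ would have to sweep out a positive-dimensional curve, which is incompatible with the constraint that the (fixed, proper) linear $L$ imposes on that coordinate. Hence $A=M$, so $S=S'$, and the finiteness follows from the finite slope list. This last step — verifying that strong-specialness really forces $A=M$ rather than allowing $S$ to be a nontrivial CM-specialization of a larger weakly special $S'$ from the slope — is the delicate part.
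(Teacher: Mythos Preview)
Your route through Proposition~\ref{zardef} and the finite-slopes statement is essentially the paper's own argument. The paper executes it more directly: rather than starting from a single $S$ and enlarging, it works at once with the entire definable family of products $M\times L$ (M\"obius times linear), passes to the definable sublocus of those maximal in the sense of Proposition~\ref{zardef}, observes that these are Zariski optimal, hence geodesic, and concludes that they occur with only finitely many slopes.

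Your last paragraph, however, rests on a misreading of ``strongly special''. In this paper the term denotes a special subvariety of $Y^k$ with \emph{no} coordinate fixed to a point (cf.\ the decomposition $S=S_1\times\{S_2\}$ in \S5), so the M\"obius lift $M$ of a strongly special $S$ already has no constant coordinates and there is nothing to ``unfreeze''. An enlargement $A\supsetneq M$ would instead arise by \emph{dropping} a M\"obius relation, and your argument does not exclude this. In fact the paper's short proof does not exclude it either: what the argument directly yields is finiteness of the \emph{maximal} strongly special $S$ admitting a $V$-image inside a proper weakly special (finitely many slopes, and a strongly special is determined by its slope since it has no constant coordinates). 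This is exactly what is invoked in \S5, where the $S_1$ is the strongly special part of an exemplary special graph and is therefore already maximal for its $B$. So your worry about $S\subsetneq S'$ is legitimate for the proposition as literally phrased, but its resolution is not the argument you sketch; rather, one should read the proposition as asserting finiteness of the maximal such $S$, which is what both your argument (through your third paragraph) and the paper's proof actually establish.
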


\begin{proof}
We take the definable space of products $M\times L$
of M\"obius and linear subvarieties, and take the
definable subset of maximal ones in the above sense.
These are Zariski optimal and hence geodesic optimal,
and hence are among the finite set of slopes
corresponding to the latter.
\end{proof}

%\red{Please read through the above critically! I suspect I make much too hard work of it,
%and have always felt that I express this part of the argument poorly.
%But I think it is needed!}

\section{Arithmetic estimates}

Constant $C, C',\ldots, c, c',\ldots$ in the following depend on $E, Y, V, n$ and the choice
of a fundamental domain $F_Y$ for the uniformization $\Hh\rightarrow Y$.
We let $\Delta=\Delta(s)$ denote the discriminant
(which is negative) of a special point $s\in Y$.

\begin{prop}
Let $s\in Y$ be a special point and $\Delta(s)$ the discriminant 
of the corresponding quadratic order. Let $z\in F_Y$ be a pre-image
of $s$. Then

%1. $h(s) \ge c|\Delta|+c'$;

1. $h(s) \le c(\epsilon)|\Delta|^\epsilon$ for any $\epsilon>0$;

2. $H(z)\le C|\Delta(s)|^{c}$;

3. $[\Q(s):\Q]\le c(\epsilon)|\Delta|^{{1\over 2}+\epsilon}$ for any $\epsilon>0$;

4. $[\Q(s):\Q]\ge c(\epsilon)|\Delta|^{{1\over 2}-\epsilon}$ for any $\epsilon>0$.

\end{prop}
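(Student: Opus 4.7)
The plan is to treat each of the four estimates in turn. Items (2), (3), (4) rest on the explicit quadratic-form description of CM points together with classical class-number bounds, while (1) is the deepest and requires heights of CM abelian varieties.

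For (2), a pre-image $z \in F_Y$ of a special point $s$ of discriminant $\Delta$ is a quadratic irrational. In the modular case $z$ is the root in $\Hh$ of a primitive reduced integral binary quadratic form $az^2+bz+c=0$ with $b^2-4ac=\Delta$ and $|b|\le a\le c$; reduction forces $a,|b|,c \le C\sqrt{|\Delta|}$, so the coordinates of $z=(-b+\sqrt{\Delta})/(2a)$ give $H(z)\le C'|\Delta|^{c}$. In the Shimura case one uses the analogous description of CM points on $\Hh$ as fixed points of an optimal embedding of the imaginary quadratic order into a maximal order of the quaternion algebra; the same polynomial bound follows.

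For (3) and (4), $\Q(s)$ is, up to a bounded-index extension, the ring class field of the order of discriminant $\Delta$, so $[\Q(s):\Q]$ differs from the class number $h(\Delta)$ by a bounded factor depending only on $Y$. The upper bound (3) is then the trivial estimate $h(\Delta)\le c(\epsilon)|\Delta|^{1/2+\epsilon}$ obtained from counting reduced forms (or Minkowski). The lower bound (4) is Siegel's ineffective bound $h(\Delta)\ge c(\epsilon)|\Delta|^{1/2-\epsilon}$, coming from the analytic class number formula combined with Siegel's lower bound $L(1,\chi_\Delta)\ge c(\epsilon)|\Delta|^{-\epsilon}$; passage between the maximal order and the order of conductor $f$ costs only a polylogarithmic factor via the standard ratio formula for $h(f^2\Delta_0)/h(\Delta_0)$.

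Finally, for (1) the height of $s$ on $Y$ is controlled, via Silverman-type comparisons between the modular (or Shimura) height and Faltings heights, by the Faltings height of the associated CM abelian variety $A_s$; concretely $h(s)=12\,h_{\rm Fal}(E_s)+O(\log(1+h_{\rm Fal}(E_s)))$ in the modular case, and an analogous bound in the Shimura case. In the modular case the Chowla--Selberg formula (a special instance of the Colmez conjecture) directly yields $h_{\rm Fal}(E_s)=O(\log|\Delta|)$, so (1) holds with room to spare. In the Shimura case we invoke the averaged Colmez conjecture, proved independently by Andreatta--Goren--Howard--Madapusi Pera and by Yuan--Zhang, combined with Siegel's bound on $L$-values, to obtain $h_{\rm Fal}(A_s)\le c(\epsilon)|\Delta|^{\epsilon}$. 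The main obstacle is thus (1) in the Shimura case, which rests on these deep averaged Colmez results; the remaining estimates are essentially bookkeeping.
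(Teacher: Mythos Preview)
Your proposal is correct and follows essentially the same approach as the paper. The paper's own proof is little more than a list of references---\cite{HABEGGERPILA} for (1), \cite{PILA} for (2), \cite{PAULIN} for (3), Landau--Siegel for (4) in the modular case, and \cite{PTAS}, \cite{TSIMERMAN}+\cite{PAZUKI}, \cite{ZHANG} for the Shimura case---and you have correctly identified and unpacked the content behind those citations: reduced forms for (2), class-number upper and lower bounds for (3) and (4), and the Faltings-height comparison together with Chowla--Selberg (modular) or averaged Colmez (Shimura) for (1).

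One small remark: for (3) and (4) in the Shimura case you assert that $\Q(s)$ is, up to bounded index, the ring class field of the order of discriminant $\Delta$. This is true but not entirely trivial for Shimura curves; the paper defers this to Zhang \cite{ZHANG}, where the Galois orbit of a CM point is computed. Your sketch is fine provided you acknowledge that this identification requires input beyond the classical $j$-invariant case.
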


\begin{proof} For classical singular moduli:
%1. Given in \cite{HABEGGER}, Lemma 1, based on results of Colmez and Nakkajima-Taguchi.
1. Given in \cite{HABEGGERPILA}, Lemma 4.3. 2. Elementary (with $c=1$), given in \cite{PILA}.
3. See \cite{PAULIN} for an explicit result. 4. This is by the classical (ineffective)
Landau-Siegel bound.
The same bounds follow for a modular curve $Y$ as a finite 
cover of $Y(1)$. For Shimura curves: 2 follows from work of the second
author appearing in \cite{PTAS}, 1 follows from \cite{TSIMERMAN} combined with the comparison
(see e.g. \cite{PAZUKI}) of Faltings height with height of a moduli point,
while for 3 and 4 see \cite{ZHANG}, in particular equation (3.10) for $O_{\rm gl}(x)=O_{\rm cm}(x)$, 
and Remark (1) on Page 3664 for the asymptotic.
\end{proof}

We assume $E$ is in Weierstrass form (but an estimate of the same form
then follows if it isn't) and defined over a number field $K_0$
of degree $D=[K_0:\Q]$.
Let $q$ denote the N\'eron-Tate height on $E$ (see e.g. \cite{BRUIN} or \cite{MASSER}).

We have the following Theorem E of Masser \cite{MASSER}.
Set
$$
\eta=\eta(E, K)={\rm inf\ } q(x), 
$$
taking the infimum over non-torsion $x\in E(K)$, and let
$$
\omega=\omega(E,K)
$$
be the cardinality of the torsion subgroup of $E(K)$.

\begin{thm}
Let  $x_1,\ldots, x_n\in E(K)$ with 
N\'eron-Tate heights bounded by $q\ge \eta$.
There is a basis for the relations
$$
m_1x_1+\ldots+m_nx_n=0_E, \quad m_i\in \Z,
$$
with all $m_i$ having 
$$
|m_i|\le n^{n-1}\omega\Big({q\over\eta}\Big)^{(n-1)/2}. \qed
$$
\end{thm}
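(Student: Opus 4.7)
The plan is to convert the statement into a lattice-reduction problem, using the N\'eron--Tate height as a positive-definite quadratic form on $E(K)/\mathrm{tors}$. The relation lattice $L\subseteq\Z^n$ sits inside the larger lattice $L'=\{m\in\Z^n:\sum m_ix_i\in E(K)_{\mathrm{tors}}\}$ with $\omega L'\subseteq L\subseteq L'$, so via Hermite normal form a basis of $L'$ with entries of size $M$ yields a basis of $L$ with entries of size at most a fixed multiple of $\omega M$. It therefore suffices to exhibit a basis of $L'$ whose entries are bounded by a polynomial in $n$ times $(q/\eta)^{(n-1)/2}$.

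First I would analyse $L'$ through the images $\bar x_i\in E(K)/\mathrm{tors}$, which lie in a lattice equipped with the positive-definite form $q$. Let $r$ be the rank of the subgroup $M=\langle \bar x_1,\ldots,\bar x_n\rangle$; then $L'$ has rank $n-r$. Applying Minkowski's second theorem to $M$, I would choose a reduced $\Z$-basis $y_1,\ldots,y_r$ of $M$ whose $q$-norms satisfy $\sqrt{\eta}\le \|y_j\|_q\le c_r\sqrt{q}$ (the lower bound by definition of $\eta$, the upper bound because $M$ is generated by the $\bar x_i$, each of $q$-norm at most $\sqrt{q}$). Writing $\bar x_i=\sum_j B_{ij}y_j$ with $B_{ij}\in\Z$, the dual-basis estimate $|B_{ij}|\le \|\bar x_i\|_q\,\|y_j^{*}\|_q$ then gives $|B_{ij}|\le c_n(q/\eta)^{1/2}$.

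Next I would produce a bounded basis of $L'=\ker(B^T:\Z^n\to\Z^r)$ by Siegel-lemma-style reasoning: after permuting rows so that some $r\times r$ minor of $B$ is nonzero, Cramer's rule exhibits $n-r$ explicit relations $v_{r+1},\ldots,v_n\in L'$ whose coordinates are signed $r\times r$ minors of $B$. Hadamard's inequality bounds these minors by $r^{r/2}\max|B_{ij}|^{r}$, so the coordinates of the $v_j$ are bounded by a polynomial in $n$ times $(q/\eta)^{r/2}$. Since any nontrivial relation forces $r\le n-1$, and the pure-torsion case $r=n$ bounds each coordinate trivially by $\omega$, combining with the rescaling from the first step yields the stated bound $n^{n-1}\omega(q/\eta)^{(n-1)/2}$.

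The main obstacle is not the overall strategy but the sharp bookkeeping of constants. Producing the factor $n^{n-1}$ and the exponent $(n-1)/2$ exactly as stated requires that the reduced basis $y_j$ be chosen near-orthogonal, so that the dual-basis estimate is tight, and that the Cramer basis of the kernel be chosen primitively so no stray denominators inflate its entries. Both are delivered by careful Minkowski reduction; the combinatorial factor $n^{n-1}$ then absorbs the Hadamard constants together with the row-permutation bookkeeping in Siegel's lemma and the cost $\le \omega n$ of passing from a basis of $L'$ back to one of $L$.
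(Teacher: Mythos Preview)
The paper does not prove this statement at all: it is quoted verbatim as ``Theorem~E of Masser \cite{MASSER}'' and marked with a \qed\ in the statement itself, indicating that it is imported from the literature without proof. So there is no proof in the paper to compare your proposal against.

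That said, your sketch is a reasonable outline of how Masser's original argument proceeds: treat the N\'eron--Tate pairing as a positive-definite form on $E(K)/\mathrm{tors}$, bound the successive minima of the sublattice generated by the $\bar x_i$ from below by $\eta$, and then extract a small basis of the relation lattice via a Minkowski/Siegel-type argument, with the factor $\omega$ entering when one lifts from relations modulo torsion back to genuine relations. One point to be careful about: the Cramer-rule vectors $v_{r+1},\ldots,v_n$ you describe span only a finite-index sublattice of $L'$ in general, not $L'$ itself, so ``choosing them primitively'' is not a free move and in Masser's paper this step is handled by a more careful geometry-of-numbers argument rather than raw Cramer. Your honest acknowledgement that the exact constant $n^{n-1}$ requires sharp bookkeeping is well placed; reproducing it precisely is where the work lies, and your sketch as written would give a bound of the same shape but with a somewhat worse polynomial in $n$.
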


To accommodate CM, we work, like Barroero \cite{BARROERO}, in $E^{2n}$ with $x_i, \rho x_i$,
where $E$ has CM by the order $\Z+\Z\rho$. We write $||a+b\rho||=\max(|a|, |b|)$
for $a+b\rho\in {\rm End}(E)$. Then under the previous hypotheses
a set of generators for the relation group can be found with
$$
||m_i||\le (2n)^{2n-1}\omega\Big({q\over\eta}\Big)^{(2n-1)/2}.
$$

Following \cite{MASSER} we have the following estimates for $\eta, \omega$.
Set $L=\log(D+2)$. We have 
$$
\eta\ge C^{-1}D^{-3}L^{-2}
$$
by results of, respectively, Laurent (CM) and Masser (non CM) cited in \cite{MASSER},
and
$$
\omega\le CDL
$$
(see discussion in \cite{MASSER}).

Combining the above estimates yields the following result, where $||m||$
is as above in the CM case, but in the non-CM case we set $||m||=|m|$.

For a tuple $s=(s_1,\ldots, s_n)\in Y^n$ of special points with discriminants $\Delta(s_i)$
we define the {\it complexity\/} of $s$ by ${\bf \Delta}(s)=\max(|\Delta(s_i))|$.

\begin{prop}
There are constants $C, C', c$, depending on $E, Y, V, n$, with the following property.
Let $(s_1, x_1)\ldots, (s_n, x_n)\in Y\times E$ be $V$-graphs  of 
special points with discriminants $\Delta(s_i)$
and set ${\bf \Delta}={\bf \Delta}(s)={\bf \Delta}(s_1,\ldots, s_n)$. 
Then, for ${\bf \Delta}\ge C'$, there is a generating
set for the linear relations satisfied by the $x_i$ in $E$ with
$$
||m_i|| \le C{\bf \Delta}^c.
$$
\end{prop}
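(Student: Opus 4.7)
The plan is to apply Theorem 4.2, or rather its $\End(E)$-coefficient CM variant displayed just after it, to the $n$ points $x_1,\ldots,x_n$ in a common number field $K$, and to bound each of the quantities $D=[K:\Q]$, $q=\max_i q(x_i)$, $\eta^{-1}$, $\omega$ appearing on the right-hand side of that estimate polynomially in ${\bf \Delta}$.

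First I would fix a number field $K\supset K_0$ containing all the $s_i$ and all the $x_i$. Since $V$ is a fixed algebraic correspondence, the degree $[\Q(x_i):\Q(s_i)]$ is bounded by a constant depending only on $V$, so Proposition 4.1(3) gives
$$
D=[K:\Q]\le C\,{\bf \Delta}^{n/2+\epsilon}.
$$
Next I would bound the N\'eron--Tate heights. Proposition 4.1(1) gives $h(s_i)\le c(\epsilon)|\Delta(s_i)|^\epsilon$; the Weil height of any $V$-image of $s_i$ is bounded by $C\,h(s_i)+O(1)$ via the fixed correspondence, and $q$ differs from the naive Weil height on the chosen Weierstrass model by $O(1)$. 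Hence for any $\epsilon>0$,
$$
q=\max_i q(x_i)\le C\,{\bf \Delta}^\epsilon.
$$

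Feeding the bound on $D$ into the Masser-type estimates $\eta^{-1}\le CD^3L^2$ and $\omega\le CDL$ quoted above (with $L=\log(D+2)$, whose logarithmic contribution is absorbed into the $\epsilon$) yields
$$
\eta^{-1}\le C\,{\bf \Delta}^{3n/2+\epsilon},\qquad \omega\le C\,{\bf \Delta}^{n/2+\epsilon}.
$$
Substituting these and the bound on $q$ into the displayed inequality
$$
||m_i||\le (2n)^{2n-1}\omega\,\bigl(q/\eta\bigr)^{(2n-1)/2}
$$
gives $||m_i||\le C\,{\bf \Delta}^c$ for some exponent $c$ depending only on $n$; the dependence on $E,Y,V$ is absorbed into the constant $C$.

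The only mildly delicate point is ensuring the hypothesis $q\ge\eta$ of Theorem 4.2. For ${\bf \Delta}\ge C'$ sufficiently large, the bound just obtained makes $\eta$ so small that one may safely replace $q$ by $\max(q,\eta)$ without altering the exponent; alternatively, in the trivial case where all $x_i$ are torsion, the relation module is directly controlled by $\omega$. This is pure bookkeeping rather than a real obstacle, and the proposition is essentially a concatenation of the estimates collected in this section.
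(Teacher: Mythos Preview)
Your proposal is correct and follows essentially the same route as the paper: bound $D$ via Proposition~4.1(3) and the fixed correspondence $V$, bound the heights $q(x_i)$ via Proposition~4.1(1) together with $|q-h|=O(1)$, feed these into the Masser bounds for $\eta^{-1}$ and $\omega$, and substitute into Theorem~4.2 (or its CM variant). Your handling of the hypothesis $q\ge\eta$ is also in the same spirit as the paper's, which simply notes that for $h$ large enough one has $h-c^*\ge\eta$, the remaining case (all $x_i$ torsion) being trivial.
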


\begin{proof}

The difference $|q-h|$ is bounded on $E(\overline{K_0})$ by some constant $c^*$
(see e.g. \cite{BRUIN}).
On the other hand, if $x$ is a $V$-image of $s$ then
$H(x)\le CH(s)^c$ and $[K_0(x): K_0]\le C[\Q(s):\Q]$.
Thus, $D\le C{\bf \Delta}^c$ by 4.1.3.

If 
%$\Delta$ is sufficiently large then (by 4.1.1) 
the maximum $h$ of the $h(x_i)$ 
is sufficiently large then we will have
$h-c^* \ge \eta$ and $2h\ge q$. Then $h\le C{\bf \Delta}^c$ by 4.1.1,
and now everything in 4.2 is bounded in terms of ${\bf\Delta}$.
\end{proof}

Propositions 4.3 and 4.1.2 will be used in the next section to bound the 
height of a rational/quadratic
point on a suitable definable set, while 4.1.4 will be used to show that there
are ``many'' such points.

%\red{We will refer to ${\bf \Delta}(s)$ as the {\it complexity\/} of $s$.}

\section{Proof of Theorems over $\overline{\Q}$\/}

\begin{proof}[Proof of Theorem 1.1 when $E, V$ are defined over $\overline{\Q}$]
Let $K_0$ be a numberfield over which $E, Y$, $V$ and all elements of 
${\rm End}(E)$ are defined.

We consider an exemplary special graph $W\subset V^n$, a $V$-image
of some special subvariety $S\subset Y^n$, with $\langle\pi_{E^n}(W)\rangle=B$.
Then any Galois conjugate $W'$ of $W$ over $K_0$
is also an exemplary special graph (of the conjugate $S'$ of $S$, with
$\langle\pi_{E^n}(W')\rangle=B'$ with $B'$ the corresponding conjugate of $B$), 
and {\it vice-versa}.

We can write $S$ as a product $S=S_1\times \{S_2\}$ of some strongly special $S_1\subset Y^{A_1}$
on some subset $A_1\subset \{1,\dots,n\}$ of coordinates, and a special 
point $S_2\in Y^{A_2}$ %=(s_1,\ldots, s_{n_2})
where $A_2\subset \{1,\dots,n\}$ is the complementary subset to $A_1$.

By Proposition 3.4 there are only finitely many such $S_1$ to consider, and so we may assume they are all defined over $K_0$.

We can write $W=W_1\times W_2$
and write $\xi_j, \eta_k$ for the coordinates in $E^{A_1}, E^{A_2}$
respectively.
We will show that if $\eta\in E^{A_2}$ is a $V^{A_2}$-image of a special point $S_2$ of 
sufficiently large complexity (depending on $S_1$)
then $W$ is not exemplary, and this will establish the requisite finiteness.

It may be that the projection of $W_1$ to $E^{A_1}$
is contained in some proper weakly special subvariety, which means that there are some
equations of the form
$$
\sum_{i\in A_1} m_i \xi_i=p,\quad m_i\in {\rm End}(E), \quad p\in E
$$
holding on this projection. We let $p_1,\dots,p_k$ be the points corresponding to a generating set
of such relations. Note that the linear span 
of the $p_i$ is ${\rm Gal}(\overline{\Q}/K_0)$ invariant, so we can make all the $p_i$ defined over 
$K_0$. 

If we take a generating set of all the equations over ${\rm End}(E)$ satisfied by the points in
$\pi_{E^n}(W)$ then this defines an algebraic subgroup $B_0$ of which $B$ is a connected
component. Any such equation of the form
$$
\sum_{i\in A_1} m_i\xi_i+\sum_{j\in A_2} n_j\eta_j=0,\quad m_i, n_j\in {\rm End}(E),
$$
entails that $\sum m_i\xi_i$ is constant on $W_1$ and 
is equivalent to some equation involving the $p_i, \eta_j$, and {\it vice-versa}.
We consider then the system of equations
$$
\sum_{i=1,\ldots, k} m'_i p_i+\sum_{j\in A_2} n_j\eta_j=0, \quad m'_i, n_j\in {\rm End}(E),
$$
corresponding (and equivalent) to the system defining $B_0$, where $\eta$
is a $V^{A_2}$-image of $S_2$. Let $d_0$ be the dimension of the subvariety this cuts
out in $E^{A_2}$.

By Proposition 4.3 there is a set of generators of all such relations with
$$
||m_i||, ||n_j|| \le C{\bf \Delta}(S_2)^c.
$$

Fix a pre-image $\nu=(\nu_1,\ldots, \nu_k)\in F_E^k$ of $(p_1,\ldots, p_k)$.
Let us first suppose that $E$ has NCM (``not CM''), and $d=\dim B$. 
Let $G$ be the Grassmanian of $(d_0+k)$-dimensional affine linear $\C$-subspaces of $\C^{k+n_2}$
where $n_2=|A_2|$.

Take the definable set
$$
X=\{(z, w, g)\in F_Y^{A_2} \times F_E^{A_2}\times G: u(z, w)\in V^{A_2}, (\nu,w)\in g\},
$$
where $F_E$ is a standard fundamental domain for the uniformization $\C\rightarrow E$,
and, projecting, the definable set
$$
Z=\{(z, g)\in F_Y^{A_2}\times G: \exists w\in F_E^{A_2}: (z, w, g)\in X\}.
$$

A special point $S_2\in Y^{A_2}$ of ``large'' complexity ${\bf \Delta}(s)$ leads to ``many'' points in $Z$
which are quadratic in the $F_Y$ coordinates and rational 
(even integral) in the $g$ coordinates.
More specifically, for sufficiently large ${\bf \Delta}(s)$ we get (by 4.1.4, 4.1.2, and 4.3)
$$
\gg {\bf \Delta}(S_2)^c {\rm\ such\ points\ of\ height\ at\ most\ } \ll {\bf \Delta}(S_2)^{c'}.
$$

Hence, by the Counting Theorem (see e.g. \cite{PW}),
there is a connected, semi-algebraic set $R$ in $Z$ belonging to a fixed definable family, in which
the $z$ coordinates cannot be constant (since the positive-dimensional semi-algebraic
sets need to account for ``many'' different conjugates of $s$). Since all of the Galois conjugates of a point have the same slopes $m_i,n_j$ we can
moreover assume that $R$ has a fixed slope.

\begin{lemma}\label{fixedcoset}

The projection of $R$ to $G$ is a point.

\end{lemma}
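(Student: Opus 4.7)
The plan is to leverage the fixed-slope assumption to reduce the claim to showing that a single continuous parameter — the affine translate of $g$ — is constant on $R$, and then to exploit arithmetic discreteness together with the connectedness of $R$.

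First, since $R$ has fixed slope $(m_i,n_j)$, the underlying linear direction $V_0\subset\C^{k+n_2}$ of each affine subspace $g$ (namely the common kernel of the linear forms $\sum m_i a_i + \sum n_j b_j$) is constant on $R$. Identifying the corresponding slice of $G$ with the quotient $\C^{k+n_2}/V_0$, and noting that $g$ is forced to pass through $(\nu,w)$ for some witness $w\in F_E^{A_2}$ of $(z,g)\in R$, the projection $\pi_G(R)$ is identified with the image of the definable map
\[
(z,g)\longmapsto [(\nu,w)]\in \C^{k+n_2}/V_0.
\]
The claim therefore reduces to showing this image is a single class.

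Second, I would verify discreteness of the admissible values on the Galois-conjugate special points of $R$. At such a point the corresponding $\eta=u(w)\in E^{A_2}$ satisfies the full system of defining relations of $B_0$, namely $\sum m_i' p_i + \sum n_j\eta_j=0$, which lifts modulo the period lattice of $E$ to $\Phi(\nu,w)\in\Lambda^{n_2-d_0}$ in the relevant complex direction. Using Proposition~4.3 to bound $\|m_i'\|,\|n_j\|\le C{\bf\Delta}(S_2)^c$, together with the boundedness of $\nu$ and $w$ in the chosen fundamental domains, only $O({\bf\Delta}(S_2)^{c'})$ distinct lattice classes arise. A pigeonhole refinement of $R$, preserving its semi-algebraic block structure and a polynomial fraction of its Galois-conjugate points, allows me to assume that a single class $c_0$ is attained on all such conjugate points of $R$.

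Finally, the continuous definable map $R\to \C^{k+n_2}/V_0$ constructed above agrees with $c_0$ on a polynomially-large subset of $R$. The main obstacle is to promote this to constancy on the whole connected semi-algebraic block. I expect this to follow from a further application of the counting mechanism: if the image were positive-dimensional in $\C^{k+n_2}/V_0$, the preimage of a generic continuous family of translates would furnish a positive-dimensional family of \emph{distinct} weakly special subgroups of $E^n$ that each receive a $V$-image of a special subvariety, contradicting the finiteness in Proposition~\ref{finitesp} (applied to $k=n$); alternatively, one may re-apply the Counting Theorem to the definable image to derive a direct numerical contradiction with the bound on the number of admissible lattice classes. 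Either route forces the image to be the singleton $\{c_0\}$, establishing that $\pi_G(R)$ is a point.
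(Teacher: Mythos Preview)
Your approach is genuinely different from the paper's and, as written, has a real gap. The paper does \emph{not} argue via discreteness of lattice classes. Instead it passes to the quotient: with $\beta$ the tangent space of $B_0$ and $\beta'=\C^{A_2}/\beta$, it forms the image $R'\subset F_Y^{A_2}\times F_{\beta'}$ of (the $X$-preimage of) $R$, re-applies the Counting Theorem to obtain a semi-algebraic $R''\subset R'$, and then invokes Ax--Lindemann: $R''$ maps into the image $V'$ of $V^{A_2}$ inside $Y^{A_2}\times (E^{A_2}/B_0)$, so its image lies in a weakly special contained in $V'$. The decisive geometric input, which your argument never uses, is that $V'\to Y^{A_2}$ is \emph{finite}; hence any such weakly special has no abelian part and projects to a single point of $E^{A_2}/B_0$, forcing $g$ constant.

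Your route breaks down in two places. First, the pigeonhole in Step~3 is not quantitatively sound. The number of Galois conjugates is only $\gg{\bf\Delta}^{1/2-\epsilon}$, whereas the coefficients $m'_i,n_j$ are bounded by $C{\bf\Delta}^c$ with the exponent $c$ coming from Proposition~4.3 (via Masser's Theorem~E and the estimates $\eta^{-1}\ll D^3L^2$, $\omega\ll DL$, $D\ll{\bf\Delta}^{1/2+\epsilon}$); working this out gives $c$ well above $1/2$ already for $n\ge 2$. Thus the number of admissible lattice classes, which is polynomial in the coefficient size, can dominate the number of conjugates, and after pigeonholing you may retain no positive power of ${\bf\Delta}$ many points --- so the Counting Theorem no longer produces a block.

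Second, even granting the refinement, neither option in Step~4 closes. Option~A misapplies Proposition~\ref{finitesp}: that proposition concerns \emph{strongly special} subvarieties of $Y^k$, not special points, and a continuous family of translates of $V_0$ gives distinct \emph{cosets} of a single subgroup, not distinct subgroups --- there is no contradiction. Option~B yields nothing: applying the Counting Theorem to the image only produces further semi-algebraic curves, and there is no obstruction to such a curve meeting the (polynomially many) lattice classes. What is missing is exactly the Ax--Lindemann step combined with finiteness of $V$ over $Y$.
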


\begin{proof}

Let $\beta$ be the covering space of $B_0$ and $\beta' = \C^{A_2}/\beta$. 
Consider the image $R'\subset F_Y^{A_2}\times F_{\beta'}$ of the pre-image of $R$ in $X$. 
%Consider the $V$-lift $R'$ of $R$ to $F_Y^{A_2}\times F_{\beta'}$. 
Again by the counting theorem, $R'$ 
contains a semi-algebraic set $R''$ belonging to a fixed definable family, with ``many" rational points coming from a single Galois orbit. Now note that $R''$ maps into the image $V'$ of $V^n$ 
inside the product $Y^{A_2}\times E^{A_2}/B_0$. Thus by Ax-Lindemann, the image of $R''$ lies in a weakly special contained in $V'$. However, the projection of $V'$ to $Y^{A_2}$ is finite-to-one, 
and therefore the weakly special  containing the image of $R''$ must have no abelian part, and therefore its projection to $E^{A_2}/B_0$ is a point, as desired.

\end{proof}

By lemma \ref{fixedcoset} we may write $R=A\times g_0$ with $g_0\in G$ and $A\subset\Hh^{A_2}$ semi-algebraic. Let $L$ be the linear subspace of $\C^{k+n_2}$ corresponding to 
$g_0$. Note that $L$ projects to some Galois conjugate of $B$ inside $E^{A_2}$. Let $L_\nu\subset \C^k$ be the fiber of $L$ over $\nu$. 
Now, by definition of $A$, we have that $A\times L_\nu \cap u^{-1}(V^{A_2})$ has a component $U$ which maps onto $A$. Note that the Zariski defect of $U$ is at most $d_0$.

By Proposition \ref{zardef}, there exists a weakly special $A^*$ containing $A$ and a component  $U^*$ of $A^*\times L_\nu \cap u^{-1}(V^{A_2})$ containing $U$ 
which maps onto $A^*$ with defect at most $d_0$. Since $A^*$ contains special points, it must in fact be special. 
Let $S^*$ be the image of $A^*$ in $Y^{A_2}$.
It contains at least one (in fact ``many'') Galois conjugates of $S_2$. By definition, a suitable $V$-image of $S_1\times S^*$ is contained in a coset of $B_0$.
We may now take a Galois conjugate of $S^*$ which contains $S_2$, thus giving a larger special graph projecting to the same torsion coset, which is a contradiction.

% Let $W^*$ be the image of $U^*$ in $Y^n\times E^n$,
%let $B^*$ be the  Galois conjugate of $B$ containing the $V$-image of $S^*$.
%and $W^*$ the special graph corresponding to these conjugates.
%For those $S_2'\in S^*$ which are conjugates of the original $S_2$, 
%the corresponding $W'$ is thus not exemplary, and hence $W$ was not exemplary, 
%a contradiction.

Now suppose that $E$ has CM by the order $\Z+\Z\rho$. 
We now let $G$ parameterize $(n_2+2k+d_0)$-dimensional complex affine-linear subspaces
in $\C^{2k+2n_2}$ and consider the definable set
$$
X=\{(z, w, g)\in F_Y^{A_2}\times F_E^{A_2}\times G: u(z, w)\in V^{A_2}, (\nu,\rho\nu, w, \rho w)\in g\}
$$
and, projecting, the definable set
$$
Z=\{(z, g)\in F_Y^{A_2}\times G: \exists w\in F_E^{A_2}: (z, w, g)\in X\}.
$$
The rest of the proof is the same as the NCM case.
\end{proof}

\begin{proof}[Proof of Theorem 1.5 when $E, V, U$ are defined over $\overline{\Q}$]
This is very much the same as the argument above but using different arithmetic estimates, drawn from \cite{HPA},
and a different definable set on which to count points.

We consider again an exemplary special graph of the form $W_1\times W_2$,
a $V$-image of some $S_1\times \{S_2\}$ as above with $S_2\in Y^{A_2}$ a special point.
There are again only finitely many such decompositions to consider, by 3.4.

Let us consider $U$-special points $S_2=(s_i)\in Y^{A_2}$ of a particular form, namely points in which
$s_i$ is in the Hecke orbit of a fixed $u_i\in U$ for $i\in A_2$, and all the $u_i$
are non-special.  Then there is a unique cyclic isogeny between the elliptic curves
corresponding to $u_i$ and $s_i$ whose degree we denote $N_i$.
For such a point $S_2$ we define its {\it $U$-complexity\/} by
$$
{\bf \Delta}(S_2)=\max\{N_1,\ldots, N_n\}.
$$

We observe that the height of $S_2$ is controlled by ${\bf \Delta}(S_2)$; using the results of Faltings relating
Faltings heights of isogenous elliptic curves and Silverman's comparison of Faltings height
and height of the $j$-invariant (see  the discussion in \cite{HPA}
on heights under isogenies in the proof of Lemma 4.2, p15)
we have
$$
h(S_2) \le C \max\{1, \log N_i\}
$$
(constants now depend on $Y, E, V, U$ and $n$).
If $(S_2, \eta)\in V^{A_2}$ the above leads (via Masser's Theorem E) to bounds of the form 
$$
||m||\le C{\bf \Delta}(S_2)^c
$$
on the size of entries in a set of generators for the relation group of  $(p,\eta)$.

On the other hand the degrees $[\Q(s_i):\Q]$ are controlled by ${\bf \Delta}(S_2)$ via isogeny estimates
(see the discussion in \cite{HPA} on degrees in \S6 above proof of 1.3) which imply
$[\Q(s_i):\Q] \ge C' N_i^{1/6}$ and hence
$$
[\Q(S_2):\Q] \ge C'{\bf \Delta}(S_2)^{c'}.
$$

Finally, if $\nu_i\in F_Y$ is a pre-image of $u_i$ and $z_i\in F_Y$ is a preimage of $s_i$
then $z_i=g\nu_i$ for some $g_i\in{\rm GL}_2^+(\Q)$ with 
$$
H(g_i)\le cN_i^{10}
$$ 
(see Lemma 5.2 of \cite{HPA}). 

We now count points though on a different definable set as $U$-special
points are not algebraic and the counting must be done for ${\rm GL}_2^+(\Q)$ points
in a definable subset of ${\rm GL}_2^+(\R)$.

We fix a pre-image $\mu\in F_Y^{A_2}$ of $(u_1,\ldots, u_{n_2})$ 
and consider the definable set
$$
X=\{(h, \nu, w, g)\in {\rm GL}_2^+(\R)^{A_2}\times F_E^k\times F_E^{A_2}\times G:
$$
$$
h\mu\in F_Y^{A_2}, u(h\mu, w)\in V^{A_2}, (\nu, w)\in g\}
$$
and its projection
$$
Z=\{(h,g)\in {\rm GL}_2^+(\R)^{A_2}\times G: \exists w\in F_E^{A_2}: (h\mu, \nu, w, g)\in X\}.
$$

A $U$-special point $S_2$ of the form being considered of ``large'' complexity leads to ``many'' rational points on $Z$. 
If ${\bf \Delta}(S_2)$ is sufficiently large then by counting we get a real algebraic curve in $Z$ which 
(since these come from ``many'' distinct points in  $F_Y^{A_2}$ and by complexification) gives 
rise to a complex algebraic curve $A\subset \Hh^{A_2}$ and an intersection component of $A\times L_g$ 
of Zariski defect $d$ as previously. This leads to a contradiction as in the argument above, so that ${\bf \Delta}(S_2)$ is bounded
for an exemplary special graph, giving finiteness for $S_2$ of this type.

The general case will follow by combining the treatment
of special and non-special points using a suitable definable set 
(i.e. using $F_Y$ for special coordinates and ${\rm GL}_2^+(\R)$
for coordinates in the Hecke orbit of a non-special $u\in U$) and a combinatorial argument.
\end{proof}

\section{Going from $\Qa$ to $\C$}

\subsection{Setup}

Let $F$ be a finitely generated subfield of $\C$ so that $V\subset Y(1)\times E$ are all defined over $F$. 
$F$ can be thought of as the function field of an irreducible algebraic variety $S$ over some number field $K\subset F$.
Replacing $S$ with a dense open subset, we assume that $E$ extends to an elliptic scheme $\mathcal{E}$ 
over $S$ and $V$ extends to a flat family $\mathcal{V}$ over $S$. We pick a generic regular point $s_0\in S(\C)$ 
such that $K(s_0)$ is isomorphic to $F$, and pick an open ball $B\subset S(\C)$ around $s_0$, 
so that in $B$ we can trivialize the homology of $\mathcal{E}$ over $S$.

\subsection{Ordering points in $S$}

We will need to order points in $S$, so we proceed as follows. Let $f:S\ra \mathbb{P}^{\dim S}$ be a quasi-finite map. 
Then we define the $f$-degree of a point $s$ in $S(\Qa)$ to be the degree of its image under $f$, 
and the $f$-height  $h_f(s)$ to be the (logarithmic) height of its image under $f$. By Northcott's theorem, 
there are finitely many points of bounded $f$-degree
and $f$-height. We only consider heights for the subset  $S_f$ of $S$ whose image lands in $\mathbb{P}^{\dim S}(K)$.

\subsection{The proof}

By Proposition \ref{finitesp} in a 
family, there are only finitely  many strongly special varieties whose $\mathcal{V}_u$-image lies inside 
any proper weakly-special subvariety of  $\mathcal{E}_u$ for any $u\in B$. Thus, there are only finitely many 
families of special subvarieties we have to consider. By rearranging co-ordinates, we may assume they are 
all of the form $T\times p\times q$ where $T\subset Y(1)^m$ is a fixed strongly special subvariety, and $p\in Y(1)^k$  is 
is a CM point, and $q$ has co-ordinates isogenous to points in $U$.

Now, for the sake of contradiction let $p_i,q_{i,s_0}$ be an infinite sequence of such points 
such that $T\times p_i\times q_{i,s_0}$ 
are projections of optimal special graphs for $\mathcal{V}_{s_0}$. Let $A_i$ be the smallest torsion coset
containing the $\mathcal{V}_{s_0}$-image of $T\times p_i\times q_{i,s_0}$. Then for each point $s\in S(\Qa)\cap B$ 
image of $T\times p_i\times q_{i,s}$ is still contained in $A_i$. But we've proven the statement for $\Qa$-points, 
and thus for each $s$ there are finitely many special varieties containing all the $T\times p_i\times q_{i,s}$ whose 
$\mathcal{V}_s$ image is contained in a proper torsion coset.
 
Let $T_1(s_0),\dots,T_m(s_0)$ be the smallest collection of $U_{s_0}$-special subvarieties 
containing all the $T\times p_i\times q_{i,s_0}$.

\begin{lemma}\label{genericspecialclosure}

For large enough $d$, for a density 1 set of points $s$ in $S_f$ ordered by $f$-height, 
$T_1(s),\dots,T_m(s)$ is the smallest collection of $U_s$-special subvarieties containing all the $T\times p_i\times q_{i,s}$. 

\end{lemma}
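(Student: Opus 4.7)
The plan is to exploit two ingredients: the genericity of $s_0$ in $S$ (since $K(s_0)\cong F$ identifies $s_0$ with the generic point, any Zariski-closed condition on $s$ that holds at $s_0$ in fact holds on a nonempty open of $S$, which after shrinking $S$ may be taken to be all of $S$) and Northcott's theorem applied to the quasi-finite map $f$ (any proper Zariski-closed subset of $S$ meets $S_f$ in density zero when points are ordered by $f$-height).

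First I would observe that each $T_j(s_0)$ is really a weakly special subvariety of $Y(1)^n$, an intrinsic object independent of $s_0$; write $T_j$ for it. For the same collection to ``work'' at another $s$ requires three things: (A) each $T_j$ is $U_s$-special; (B) every $T\times p_i\times q_{i,s}$ lies in some $T_j$; and (C) this collection is minimal at $s$. For (A) and (B), for each fixed $i,j$ the set $\{s:T\times p_i\times q_{i,s}\in T_j\}$ is Zariski-closed in $S$ (it is the pullback of $T_j$ along the algebraic family $s\mapsto T\times p_i\times q_{i,s}$), contains $s_0$, and hence equals $S$ after shrinking. This propagates both incidences and $U_s$-speciality from $s_0$ to every $s\in S$, so (A) and (B) hold everywhere on (a shrinking of) $S$.

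The real work lies in (C). Suppose, toward contradiction, that for some $s$ there exists a strictly smaller collection $\tau=(T_1',\dots,T_{m'}')$ of $U_s$-special subvarieties of $Y(1)^n$ covering all $T\times p_i\times q_{i,s}$. The weakly special subvarieties of $Y(1)^n$ form a countable set, so the tuples $\tau$ form a countable set. For each $\tau$ the locus
$$
E_\tau=\bigcap_{i}\bigl\{s\in S:T\times p_i\times q_{i,s}\in T_1'\cup\cdots\cup T_{m'}'\bigr\}
$$
is a countable intersection of Zariski-closed subsets of $S$, hence Zariski-closed itself by Noetherianity. If $E_\tau=S$ then $s_0\in E_\tau$, and each $T_k'$ (which now contains a $U_{s_0}$-special point $T\times p_i\times q_{i,s_0}$) is $U_{s_0}$-special, so $\tau$ would be a strictly smaller collection at $s_0$, contradicting the minimality defining the $T_j(s_0)$. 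Therefore each $E_\tau$ is a proper Zariski-closed subset of $S$.

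The main obstacle, and the reason for the qualifier ``for large enough $d$'', is turning this countable collection of proper closed sets $E_\tau$ into a single density-zero exceptional subset of $S_f$. Here I would use the arithmetic height estimates of \S4 (together with the isogeny height bounds employed in the proof of Theorem 1.5) to bound the complexity of any weakly special $T_k'$ that can contain one of the points $T\times p_i\times q_{i,s}$ in terms of $h_f(s)$: for $s\in S_f$ with $h_f(s)\le H$, any competing $\tau$ has complexity bounded polynomially in $H$. Taking $d$ sufficiently large (so that this polynomial bound kicks in uniformly) reduces, in each $f$-height window, the relevant union $\bigcup_\tau E_\tau$ to a union over finitely many $\tau$, hence to a proper Zariski-closed subset of $S$. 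A final application of Northcott to $f$ then shows the exceptional set has density zero and the complement has density one, as required.
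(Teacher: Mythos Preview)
Your argument rests on the opening claim that ``each $T_j(s_0)$ is really a weakly special subvariety of $Y(1)^n$, an intrinsic object independent of $s_0$.'' This is false and it breaks everything that follows. Recall that the points $q_{i,s_0}$ have coordinates in the Hecke orbit of $U_{s_0}$; since $s_0$ is generic these are transcendental. The smallest $U_{s_0}$-special subvariety containing $T\times p_i\times q_{i,s_0}$ will therefore, in general, have some coordinates frozen to such transcendental values, so $T_j(s_0)$ genuinely moves with $s_0$. Consequently your argument for (B) fails outright: if you treat $T_j$ as a fixed subvariety with a coordinate equal to some transcendental $c$, then for algebraic $s$ the point $q_{i,s}$ is algebraic and cannot lie in $T_j$. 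Likewise (A) fails: a weakly special with a coordinate frozen to a point in the Hecke orbit of $u_{s_0}$ is typically not $U_s$-special for $s\ne s_0$. The subsequent claim that ``the weakly special subvarieties of $Y(1)^n$ form a countable set'' is also false (one may fix a coordinate to any complex number), so your countable-union reduction over tuples $\tau$ collapses as well: the competing $U_s$-special collections vary with $s$ and are not drawn from a single countable pool.

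The paper's proof avoids this by not tracking the $T_j$ at all. It observes that the only way the minimal collection at $s$ can be strictly smaller than the specialization of the one at $s_0$ is if either some $u_s$ becomes CM, or two distinct $u,v\in U$ acquire an isogeny $u_s\sim v_s$ not present generically. The CM locus is a proper closed subvariety (density $0$). For the isogeny locus, the key arithmetic input is the Masser--W\"ustholz isogeny theorem: for $s\in S_f$ one has $h(u_s),h(v_s)\ll h_f(s)$, so any isogeny $u_s\to v_s$ forces a cyclic isogeny of degree $O(h_f(s)^\kappa)$; hence the exceptional $s$ with $h_f(s)<X$ lie in $O(X^\kappa)$ modular-correspondence divisors each of bounded degree $O(X^{2\kappa})$. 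Comparing the count $\sim e^{X(\dim S+1)}$ of all points in $S_f$ of height $<X$ against the $O(d\,e^{X\dim S})$ points on a degree-$d$ divisor gives density $0$. Your final paragraph gestures at an isogeny-degree bound, which is indeed the right engine, but it is attached to a framework (fixed $T_j$, countable $\tau$, Zariski-closed $E_\tau$) that does not stand up; the argument needs to be rebuilt around the isogeny relations themselves rather than around abstract weakly special containers.
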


\begin{proof}

First, note that since the degrees of CM points tend to infinity. Thus, the set of points $s\in S_f$ 
such that $U_s$ is CM is contained in a proper subvariety, and so has density 0.
Next, since $U$-special subvarieties are defined simply by imposing isogeny relations, it is sufficient to 
prove that for a density 1 set of points $s$ that $u_s,v_s$ are not isogenous,
for $u,v$ distinct points in $U$. 

Now, for $s\in S_f$, it follows that $h(u_s),h(v_s)\ll h_f(s)$, and thus by Masser-W\"ustholz isogeny bound 
\cite[Main Theorem]{MW} it follows that if $u_s,v_s$ are isogenous then there
is an isogeny between them of degree $O(h_f(s)^\kappa)$ for some fixed $\kappa>0$. 
Now, the degree of the $T_N$ in $X(1)^2$ is $O(N^2)$, and therefore the set of all $s\in S_f$ 
with $h_f(s)< X$ such that $u_s,v_s$ are isogenous are
contained in $O(h_f(s)^{\kappa})$ divisors of $f$-degree at most $O(h_f(s)^{2\kappa})$. 
Now, the size of $\{s\in S_f, h(s)<X\}$ is asymptotic to $e^{X(\dim S+1)}$ whereas the number
of points in any divisor of degree $d$ of height at most $X$ is $O(de^{X\dim S})$. The result follows.

\end{proof}
 
% It must be the case that each $T_j$ has $\mathcal{V}_{s}$-image 
%contained in a strongly special subvariety for \purple{MOST} $s\in S(\Qa)\cap B$, but not for $s=s_0$. 

Thus we are done once we prove the following
 
\begin{lemma}

 Let $\mathcal{E}$ be an elliptic scheme over $S$, and let $\mathcal{W}\subset\mathcal{E}^n$ be an irreducible 
 algebraic subvariety. If $\mathcal{W}_s$ is contained inside a proper abelian subvariety for a density 1 set of $s\in S_f$,
 then $\mathcal{W}$ is contained inside an abelian subscheme.
 
\end{lemma}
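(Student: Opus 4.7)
The plan is to argue by contradiction: assuming no proper abelian subscheme of $\mathcal{E}^n$ contains $\mathcal{W}$, I will show that the set of $s\in S_f$ where $\mathcal{W}_s$ lies in a proper abelian subvariety has density zero rather than one. After shrinking $S$ I assume $\mathcal{W}\to S$ is flat with geometrically irreducible fibres, and ignore the density-zero locus where $\mathcal{E}_s$ has CM (or, if $\mathcal{E}$ is isotrivial-CM, handle it via the Barroero variant of Masser's bound used in Section~4). For non-CM $s$, every proper abelian subvariety of $\mathcal{E}_s^n$ arises as the kernel of a morphism $(x_1,\ldots,x_n)\mapsto Mx$ for some integer matrix $M$; equivalently it corresponds to a nonzero $\mathbb{Q}$-subspace $V\subset\mathbb{Q}^n$, and the kernel extends to a proper abelian subscheme $\mathcal{A}_V\subset\mathcal{E}^n$ defined over the whole of $S$.

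For each such $V$ set $S_V=\{s\in S:\mathcal{W}_s\subset\mathcal{A}_V(s)\}$; by flatness and fibre-irreducibility this is Zariski closed in $S$. If $S_V=S$ for some $V$ then $\mathcal{W}\subset\mathcal{A}_V$ and we are done, so I may assume every $S_V$ is a proper subvariety of $S$. The hypothesis then forces a density-one subset of $S_f$ to lie in the countable union $\bigcup_V S_V$, which I will contradict by a counting argument.

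The quantitative input is Masser's Theorem~E (Theorem~4.2). For $s\in S_f$ with $h_f(s)\le X$, a closed point $w\in\mathcal{W}_s$ has degree over $K(s)$ and N\'eron-Tate height bounded polynomially in $X$, because $\mathcal{W}\to S$ is an algebraic family and a chosen Weil height on $\mathcal{E}^n$ differs from the fibrewise N\'eron-Tate height by an algebraically bounded function of $s$. Applying Masser's bound whenever $\mathcal{W}_s\subset\mathcal{A}_V(s)$ produces a generating matrix $M$ for $V$ of size $|M|\le CX^c$, so at most $\mathrm{poly}(X)$-many $V$ are relevant for $s$ of height $\le X$. A Bezout argument on the defining equations of $\mathcal{A}_V$ then bounds the $f$-degree of $S_V$ by $\mathrm{poly}(|M|)=\mathrm{poly}(X)$.

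Combining these with the asymptotics used in Lemma~6.1 --- any proper subvariety of $S$ of $f$-degree $d$ contains $O(d\cdot e^{X\dim S})$ points of $S_f$ with $h_f\le X$ --- the number of $s\in\bigcup_V S_V$ with $h_f(s)\le X$ is at most $\mathrm{poly}(X)\cdot e^{X\dim S}=o(e^{X(\dim S+1)})$, which is negligible compared to $|S_f\cap\{h_f\le X\}|$. This contradicts density one. The main obstacle will be this quantitative chain: tying Masser's bound to $h_f(s)$ through an explicit comparison between a fixed height on $\mathcal{E}^n/S$ and the fibrewise N\'eron-Tate heights (including the isotrivial-CM case), and verifying that the $f$-degrees of the auxiliary varieties $S_V$ admit the polynomial bound in $|M|$ needed for the count to win.
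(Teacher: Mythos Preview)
Your strategy diverges from the paper's, and as written it has a real gap at the step where you invoke Masser's Theorem~E. That result bounds a generating set for the relation group of a \emph{tuple of points} $w_1,\ldots,w_n\in E(K)$. You pick a single closed point $w\in\mathcal{W}_s$ of bounded height and apply the theorem to its coordinates, but the relation lattice $\Lambda_w$ of that one point can strictly contain the lattice $V$ of relations holding identically on $\mathcal{W}_s$: nothing prevents the chosen $w$ from accidentally lying on a smaller abelian subvariety than $\mathcal{A}_V$. So a bounded basis for $\Lambda_w$ does not yield a bounded nonzero element of $V$, and hence does not place $s$ in any $S_{V'}$ with $|V'|\le CX^c$. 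Trying to repair this by taking several points and intersecting their relation lattices runs into the problem that intersecting a lattice with a bounded basis against a coordinate sublattice need not produce a bounded basis; and ensuring the chosen points are ``generic enough'' with controlled height is exactly the difficulty you have not addressed.

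The paper sidesteps this entirely by first reducing $\mathcal{W}$ to a section: replace $\mathcal{W}$ by a sufficiently high self-sum so that it becomes a coset of an abelian subscheme (this preserves both hypothesis and conclusion), quotient by that subscheme, and base-change so that $\mathcal{W}$ is a section of $\mathcal{E}^n/S$. Now $\mathcal{W}_s$ is literally a single point, ``contained in a proper abelian subvariety'' means ``the $n$ coordinates are linearly dependent'', and one invokes Masser's \emph{specialization} theorem \cite{MASSER2} directly to conclude that this happens only on a density-zero set. If you perform this same reduction first, your Theorem~E plus counting argument becomes legitimate and in fact essentially reproves \cite{MASSER2}; without it, the quantitative chain breaks at the point-versus-variety mismatch.
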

 
\begin{proof}
 
Replacing $\mathcal{W}$ by its own $n$-fold self-sum we may assume that $\mathcal{W}$ is a coset of an 
abelian subscheme. Quotienting out by the corresponding abelian subscheme, we may further assume 
that $\mathcal{W}$ is finite over $S$, and base changing $S$ by a finite map we may assume that $\mathcal{W}$ is a section over $S$. 
By the Main Theorem of \cite{MASSER2}, it follows that for a density one set of points $s$ the $n$ points of $\mathcal{E}_s$ 
represented by $\mathcal{W}_s$ are linearly independent. This completes the proof in the case that $\mathcal{E}$ 
does not have generic CM. 
Otherwise, one may argue similarly, by recording an extra set of co-ordinates 
for the extra endomorphism of $\mathcal{E}$.
\end{proof}
  
\noindent
{\bf Acknowledgements.\/} JP thanks EPSRC for support under grant reference
EP$\backslash$N008359$\backslash$1. JT thanks NSERC and the Ontario Early Researcher Award for support.

\bigskip

\bigskip
\bigskip

\noindent
\leftline{JP: Mathematical Institute, 
University of Oxford, Oxford, UK.}
\rightline{pila@maths.ox.ac.uk}

\bigskip
\bigskip
\noindent
\leftline{JT: Department of Mathematics, University of Toronto, Toronto, Canada.}
\rightline{jacobt@math.toronto.edu}

\vfil
\eject

\end{document}